\begin{document}
  \newcommand{\dcup}{\makebox[0pt]{\quad \! \cdot} \cup}
  \newcommand{\om}{m^{*}}
  \newcommand{\eps}{\epsilon}
  \newcommand{\ea}{\begin{eqnarray*}}
  \newcommand{\ee}{\end{eqnarray*}}
  \newcommand{\nea}{\begin{eqnarray}}
  \newcommand{\nee}{\end{eqnarray}}
  \newcommand{\RRR}{\mathfrak{R}}
  \newcommand{\NNN}{\mathfrak{N}}
  \newcommand{\FFF}{\mathfrak{F}}
  \newcommand{\SSS}{\mathfrak{S}}
  \newcommand{\mmm}{\mathfrak{m}}
  \newcommand{\aaa}{\mathfrak{a}}
  \newcommand{\bbb}{\mathfrak{b}}
  \newcommand{\ccc}{\mathfrak{c}}
  \newcommand{\ppp}{\mathfrak{p}}
  \newcommand{\qqq}{\mathfrak{q}}
  \newcommand{\Aa}{\mathcal{A}} \newcommand{\Bb}{\mathcal{B}}
  \newcommand{\Mm}{\mathcal{M}}
  \newcommand{\Rr}{\mathcal{R}}
  \newcommand{\Ee}{\mathcal{E}}
  \newcommand{\Ff}{\mathcal{F}}
  \newcommand{\Cc}{\mathcal{C}}
  \newcommand{\Tt}{\mathcal{T}}
  \newcommand{\Nn}{\mathcal{N}}
  \newcommand{\RR}{\mathbb{R}}
  \newcommand{\CC}{\mathbb{C}}
  \newcommand{\eRR}{\overline{\RR}}
  \newcommand{\QQ}{\mathbb{Q}}
  \newcommand{\NN}{\mathbb{N}}
  \newcommand{\PP}{\mathbb{P}}
  \newcommand{\ZZ}{\mathbb{Z}}
  \renewcommand{\AA}{\mathbb{A}}
  \newcommand{\limn}{\lim_{n\to\infty}}
  \newcommand{\liminfn}{\liminf_{n\to\infty}}
  \newcommand{\limsupn}{\limsup_{n\to\infty}}
  \newcommand{\liminfj}{\liminf_{j\to\infty}}
  \newcommand{\limsupj}{\limsup_{j\to\infty}}
  \newcommand{\limj}{\lim_{j\to\infty}}
  \newcommand{\limi}{\lim_{i\to\infty}}
  \newcommand{\sumj}{\sum_{j=1}^{\infty}}
  \newcommand{\sumi}{\sum_{i=1}^{\infty}}
  \newcommand{\sumn}{\sum_{n=1}^{\infty}}
  \newcommand{\Lra}{\Longrightarrow}
  \newcommand{\Lla}{\Longleftarrow}
  \newcommand{\floor}[1]{\left\lfloor #1 \right\rfloor}
  \newcommand{\ceil}[1]{\left\lceil #1 \right\rceil}
  \newcommand{\norm}[1]{\|#1\|_u}
  \newcommand{\fh}{\hat{f}}
  \newcommand{\Ft}{\widetilde{F}}
  \newcommand{\ft}{\tilde{f}}
  \newcommand{\gh}{\hat{g}}
  \newcommand{\ffint}{\frac{1}{2\pi}\int_{-\pi}^\pi}
  \newcommand{\fint}{\int_{-\pi}^\pi}
  \newcommand{\fp}[1]{\langle #1 \rangle}
  \newcommand{\net}[1]{\langle #1 \rangle}
  \renewcommand{\Re}{\mathrm{Re}}
  \newcommand{\var}{\mathop{\mathrm{var}}\nolimits}
  \renewcommand{\subset}{\subseteq}
  \renewcommand{\supset}{\supseteq}
  \renewcommand{\emptyset}{\varnothing}
  \newcommand{\diam}{\mathop{\mathrm{diam}}}
  \newcommand{\krull}{\mathop{\mathrm{Krull\ dim}}}
  \newcommand{\card}{\mathop{\mathrm{card}}}
  \newcommand{\spec}{\mathop{\mathrm{Spec}}}
  \newcommand{\rad}{\mathop{\mathrm{rad}}}
  \newcommand{\tr}{\mathop{\mathrm{tr}}}
  \newcommand{\im}{\mathop{\mathrm{Im}}}
  \newcommand{\tor}{\mathop{\mathrm{Tor}}\nolimits}
  \newcommand{\ext}{\mathop{\mathrm{Ext}}\nolimits}
  \renewcommand{\hom}{\mathop{\mathrm{Hom}}\nolimits}
  \newcommand{\id}{\mathop{\mathrm{Id}}\nolimits}
  \renewcommand{\ker}{\mathop{\mathrm{Ker}}\nolimits}
  \newcommand{\coker}{\mathop{\mathrm{Coker}}\nolimits}
  \newcommand{\ann}{\mathop{\mathrm{Ann}}}
  \newcommand{\supp}{\mathop{\mathrm{Supp}}}
  \newcommand{\ems}{m^*}
  \newcommand{\znq}{{\ZZ^{\NN}_q}}
  \newcommand{\zq}[1]{\ZZ^{#1}_q}
  \newcommand{\cl}[1]{\overline{#1}}
  \newcommand{\ah}{\hat{a}}
  \newcommand{\bh}{\hat{b}}
  \newcommand{\ms}{\mu^{*}}
  \newcommand{\Hh}{\mathcal{H}}
  \newcommand{\Uj}{\bigcup_{j=1}^{\infty}}
  \newcommand{\Ui}{\bigcup_{i=1}^{\infty}}
  \newcommand{\Un}{\bigcup_{n=1}^{\infty}}
  \newcommand{\Ij}{\bigcap_{j=1}^{\infty}}
  \newcommand{\In}{\bigcap_{n=1}^{\infty}}
  \newcommand{\E}{\mathbf{E}}
  \renewcommand{\P}{\mathbf{P}}
  \renewcommand{\var}{\mathbf{Var}}
  \newtheorem{thm}{Theorem}
  \newtheorem*{restate}{Theorem 3}
  \newtheorem{prop}[thm]{Proposition}
  \newtheorem{lemma}[thm]{Lemma}
  \newtheorem{cor}[thm]{Corollary}
  \theoremstyle{definition}
  \newtheorem{defn}[thm]{Definition}
  \theoremstyle{plain}

\title{The Mar\v{c}enko-Pastur law for sparse random\\ bipartite biregular
 graphs}
 \date{September 9, 2014}
\author{Ioana Dumitriu\thanks{Both authors acknowledge support from NSF 
 CAREER Award DMS-0847661. The second author also received support from NSF grant
 DMS-1401479.}\\
 \texttt{dumitriu@math.washington.edu}\\
 \hfill\\
 Department of Mathematics\\ University of Washington\\Box 354350\\ Seattle, WA 98195
 \and
 Tobias Johnson\\
 \texttt{toby@math.washington.edu}\\\hfill\\
 Department of Mathematics\\
 University of Southern California\\
 3620 S.~Vermont St, KAP~108\\
 Los Angeles, CA 90089
 }
\maketitle
  \begin{abstract}
We prove that the empirical spectral distribution of a $(d_L, d_R)$-biregular, 
bipartite random graph, under certain conditions, converges
to a symmetrization of the Mar\v{c}enko-Pastur distribution of random
matrix theory. This convergence is not only global (on fixed-length
intervals) but also local (on intervals of increasingly smaller length).  Our method parallels the one used previously by Dumitriu and Pal (2012).  
  \end{abstract}

  \section{Motivation}\label{sec:intro}

In classical random matrix theory there are two basic types of symmetric
ensembles: Wigner matrices and Wishart-like ones.
There is a simple parallel between them,
roughly expressed in the following way. Given a non-symmetric real
matrix $G$, there are two natural ways to construct from it a symmetric
matrix: if $G$ is square, one way is to consider its symmetric part
$A = \frac{G+G^{T}}{2}$; the other way works for rectangular matrices,
too, and consists of multiplying it by its transpose: $W = GG^{T}$. If
one starts with a random matrix $G$ with i.i.d.\ entries of norm~$0$
and variance~$1$, if $G$ is square, the first symmetrization yields a
Wigner matrix; the second symmetrization yields a Wishart-like matrix
(it is Wishart, more precisely central Wishart, if $G$ consists of
standard normal variables; we call it Wishart-like otherwise). 

The spectra and eigenvectors of Wigner and Wishart-like matrices have been
shown to exhibit \emph{universal} behavior: many of their
eigenstatistics have limiting distributions which are independent of
the \emph{entry} distribution, modulo certain technical conditions (in
addition to being mean 0, variance 1). These results, along with
successive weakenings of the technical conditions, are the subject of
a recent set of breakthrough papers \cite{TaV,TV2,ESY1,ESY2,ESY3,ESY4,ESY5}.

A natural question for the discrete probability community is whether this universal behavior extends to adjacency matrices of random graphs; we are specifically interested in the case of random regular or semi-regular graphs. Such graphs are known to have very interesting properties: they are good expanders, some classes are quasi-Ramanujan, they have wide spectral gaps and as such they mix rapidly, and they are of interest in computer science and engineering and in coding theory.

Ordinary random regular graphs (or $d$-regular graphs), where every vertex has the same degree (which grows as a function of the number of vertices), have been recently investigated in \cite{DuP, DJPP, J, JP, TVW, BL}; we aim to extend some of the results to bipartite, biregular random graphs, where the two sets of vertices have the property that all vertices in the same set have the same degree (also growing with the total number of vertices). 

The question of whether the spectra of random $d$-regular graphs have the
same behavior as the spectra of Wigner matrices is non-trivial in
nature, since the adjacency matrices of regular graphs have strong
dependencies, namely, all rows and columns add to the same
number---their common degree. As such, they are not Wigner; in fact,
for $d$ fixed, McKay \cite{McK} showed that the scaled \emph{empirical
  distribution  function} (or ESD; defined below) 
of random $d$-regular graphs on $n$ vertices converges in probability 
(and almost surely,
if the random graphs are defined on the same probability space) 
as $n\to\infty$ to
  the Kesten-McKay distribution, which has density
\[
f_d(x)=\begin{cases}
               \frac{d\sqrt{4(d-1)-x^2}}{2\pi(d^2-x^2)}&\text{if $|x|\leq 
                 2\sqrt{d-1}$,}\\0&\text{otherwise.}
              \end{cases}
\]
This differs from the Wigner matrix case, where the scaled ESD converges in probability to the semicircle law, which has density
\[
f_s(x) =\begin{cases}
  \frac{\sqrt{4-x^2}}{2\pi}&\text{if $|x|\leq 2$,}\\
  0 &\text{otherwise.}
\end{cases}
\]

When $d$ is allowed to grow with $n$, the scaled empirical spectral
distribution of the random $d$-regular graph does converge in
probability to the semicircle law  (see \cite{DuP} and \cite{TVW}).  Thus, even if for $d$ fixed they are rather different, the spectra of $d$-regular graphs with $d$ and $n$  growing to infinity are similar to the spectra of large Wigner matrices. 

Motivated by a question asked by Babak Hassibi, we study here the
spectra of bipartite, biregular random graphs. We find that their
spectra have similar behavior to the spectra of Wishart-like matrices;
at first glance, this may appear suprising, but further examination
reveals linear algebraic reasons why this should be so.
Our notation and main results are presented in Section~\ref{prelim}.
In Sections~\ref{sec:main} and \ref{sec:locallaw}, we prove global and local
convergence, respectively, of the ESD to its limiting measure.
The appendix contains the proofs of some statements on the distribution
of cycles in biregular bipartite graphs, used in
Sections~\ref{sec:main} and \ref{sec:locallaw} to show that our graphs
are locally well approximated by trees.

%

\section{Preliminaries and statements of results} \label{prelim}
We assume that graphs do not have loops or parallel edges.
The adjacency matrix of a graph $G$ is defined as 
\[
A(i,j) = \left \{ \begin{array}{cr}1, &~~~\mbox{if}~ i \sim j, \\
                                                 0,
                                                 &~~~\mbox{otherwise.}
\end{array} \right .
\]
Note that $A$ is symmetric, and therefore all of its eigenvalues are
real.

Bipartite graphs are graphs composed of two sets $L$ and
$R$ of vertices, with edges only between vertices from $L$ and
vertices from $R$. With proper labeling, their adjacency matrices 
have the special form 
\[
B = \left [  \begin{array}{cc} 0 & X \\ X^{T} & 0 \end{array} \right ]~,
\]
where the matrix $X$ is defined by the edges between the two classes of
vertices. Note that if $L$ and $R$ have sizes $m$ and
 $n$, respectively, with $m \leq n$, then $X$ is an $m
\times n$ matrix of $0$s and $1$s.
It is a simple linear algebra result that the non-zero eigenvalues of
$B$ come in pairs $(-\lambda, \lambda)$, with $\lambda \geq 0$ an
eigenvalue of $XX^{T}$, and that $B$ has (at least) $n-m$ eigenvalues
equal to $0$.

If $G$ is a bipartite graph with vertex classes $L$ and $R$, then we say it is 
  $(d_L, d_R)$-biregular
  if all the vertices in $L$ have degree $d_L$, and all the vertices in $R$ have
  degree $d_R$.  For simplicity, we will always assume
  that $d_R\geq d_L$ (and therefore that $|L|\geq |R|$).  
  
By a random $(d_L, d_R)$-biregular bipartite graph with
  $(m+n)$ vertices we mean a graph selected uniformly from the space
  of all $(d_L,d_R)$-biregular bipartite graphs with $|L|=m$
  and $|R|=n$.  

We define the \emph{empirical spectral distribution} or
  ESD of a symmetric $n\times n$ matrix $M$ 
  to be the probability
  measure $\mu_n$ on the real numbers given by
  \begin{align*}
    \mu_n = \frac{1}{n}\sum_{i=1}\delta_{\lambda_i},
  \end{align*}
  where $\delta_x$ is the point mass at $x$ and $\lambda_1,\ldots,
  \lambda_n$ are the eigenvalues of $M$.  Note that if $M$ is random, $\mu$ is a
  random probability measure. 

We say that a sequence  $\mu_1,\mu_2,\ldots$
  of random probability measures on the real numbers converges almost surely
  to a deterministic probability measure $\mu$ if as $n\to\infty$,
  \begin{align*}
    \int f\, d\mu_n \to \int f\,d\mu \text{ a.s.}
  \end{align*}
  for all bounded continuous functions $f\colon\RR\to\RR$. This is
  equivalent to the slightly different statement that with probability
  one, $\mu_n$ converges weakly to $\mu$.
  
Following combinatorialists'
  conventions, we will often refer to a random graph $G$ when we really
  mean a sequence of random graphs with an increasing
  number of vertices (depending on $m$ and $n$).
  We assume that all of these random graphs are defined on
  the same probability space, but we make no assumptions 
  about their joint distribution;
  all of our results hold for any arbitrary joint distribution, so long
  as the marginal distributions are as described.
  Many
  variables that we mention implicitly depend on $n$, and asymptotic
  expressions $O(\cdot)$ or $o(\cdot)$ reflect
  behavior as $m, ~n\to\infty$.  We will occasionally use the
  notation $O_A(\cdot)$ to indicate that the constant
  in the big-O expression depends on some other constant $A$.
  
  
  It is a standard result in random matrix theory
  that if $X$ is an $m\times n$ random matrix whose entries are 
  i.i.d.\ real random variables with mean zero and variance one,
  and $m/n$ converges to a finite limit,
  then the ESD of $\frac{1}{n}X^TX$ converges to
  the Mar\v{c}enko-Pastur law (see \cite{Bai}, for example).
  We show here an analogous result:
  \begin{thm}\label{thm:sq}
    Let $G$ be a random $(d_L,d_R)$-biregular bipartite graph
    on $m+n$ vertices, with the following conditions
    on the growth of $d_L$ and $d_R$:
    \begin{align}
      \limn d_R&=\infty,\label{eq:dc1}\\
      \mbox{for any fixed $\epsilon>0$,}~~d_R &= o( n^{\epsilon}),\label{eq:dc2}\\
      \frac{d_R}{d_L} &\to y\geq 1.\label{eq:dc3}
    \end{align}
    Let $A = \left(\begin{smallmatrix} 0&X\\X^T & 0\end{smallmatrix}
    \right)$ be the adjacency matrix of $G$ (under proper labeling).  Then as $n\to\infty$,
    the ESD
    of $\frac{1}{d_R} X^TX$ converges almost surely to the
    Mar\v{c}enko-Pastur law with ratio $y^{-1}$.
    This distribution
    is supported on $[a^2,b^2]$ and is given on that
    interval by the density
    \begin{align*}
      p(x)=\frac{y}{2\pi x}\sqrt{(b^2-x)(x-a^2)},
    \end{align*}
    where $a = 1-y^{-1/2}$ and $b = 1+y^{-1/2}$.
  \end{thm}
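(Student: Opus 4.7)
My plan is to proceed by the method of moments, in close parallel with Dumitriu and Pal's treatment of $d$-regular graphs \cite{DuP}. Let $\mu_n$ denote the ESD of $M_n := \frac{1}{d_R}X^TX$, and let $m_k$ be the $k$-th moment of the target Marchenko--Pastur law with ratio $y^{-1}$. Since that law is compactly supported (and hence determined by its moments), it suffices to show $\int x^k\,d\mu_n \to m_k$ almost surely for each fixed $k\geq 0$. I first observe that the all-ones vector on $R$ is an eigenvector of $X^TX$ with eigenvalue $d_Ld_R$, so $M_n$ has one ``trivial'' outlier eigenvalue at $d_L$; this single outlier contributes only $d_L^k/n=o(1)$ to each moment under \eqref{eq:dc2}, so it is harmless.

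The trace-to-walks identity reads
\[
\int x^k\,d\mu_n \;=\; \frac{1}{n d_R^k}\tr\bigl((X^TX)^k\bigr) \;=\; \frac{1}{n d_R^k}\sum_{v\in R}W_{2k}(v),
\]
where $W_{2k}(v)$ counts closed walks of length $2k$ in $G$ starting at $v\in R$ and alternating between $R$ and $L$. I classify each walk by the bipartite multigraph $H$ it traces. The main contribution comes from \emph{tree walks}: those whose trace is a tree on $k+1$ vertices with every edge used exactly twice. These are in bijection with rooted bipartite plane trees rooted at a right vertex. For a plane-tree shape with $s$ left-vertices and $t=k+1-s$ right-vertices, a switching argument for the biregular model shows that the expected number of rooted embeddings at a fixed $v\in R$ is $(1+o(1))\,d_R^{\,s}d_L^{\,k-s}$: at each vertex one picks an ordered tuple of distinct children among its $d_R$ or $d_L$ neighbors, and the resulting falling factorials tend to powers as $d_R,d_L\to\infty$. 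Summing over plane-tree shapes (with Narayana-type multiplicities $N(k,s)$) and over $v\in R$, then normalizing, gives
\[
\frac{1}{n d_R^k}\,\E\bigl[\#\text{tree walks}\bigr] \;\longrightarrow\; \sum_{s=1}^{k}N(k,s)\,y^{\,s-k} \;=\; m_k.
\]

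The main obstacle is to show that \emph{excess} walks, whose trace $H$ has a cycle (so $H$ uses fewer than $k$ distinct edges), contribute $o(n d_R^k)$ in expectation. The heuristic is that each additional independent cycle in $H$ forces one vertex of the embedding to coincide with a previously chosen one, and since the probability of two prescribed right vertices having a given common left neighbor is of order $1/m$, each extra cycle reduces the expected embedding count by a factor $\asymp 1/d_L$ or $1/d_R$. For each fixed $k$ there are only $O_k(1)$ non-tree multigraph shapes that can arise, so the total excess contribution is at most $C_k n d_R^{k-1}$, which vanishes after normalization. Making this quantitative relies on the appendix lemmas on the Poisson-like behavior of short cycles in biregular bipartite random graphs; the sub-polynomial growth condition \eqref{eq:dc2} is used to keep combinatorial factors negligible against the cycle penalties.

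For almost sure convergence I estimate $\var\bigl(\int x^k\,d\mu_n\bigr)$ by the same walk machinery applied to pairs of closed walks: pairs with disjoint vertex supports are essentially uncorrelated (a switching argument shows their joint expectation factors up to small error), while pairs sharing at least one edge contribute $O(nd_R^{2k-1})$. This gives $\var\bigl(\int x^k\,d\mu_n\bigr)=O\bigl((nd_R)^{-1}\bigr)$, which by \eqref{eq:dc1} tends to zero. A Borel--Cantelli argument along a suitable polynomial subsequence of $n$, combined with deterministic interpolation using the boundedness of the bulk spectrum, delivers almost sure convergence of each moment, hence weak convergence of $\mu_n$ to the Marchenko--Pastur law with ratio $y^{-1}$.
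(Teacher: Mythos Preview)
Your moment-method approach is in the same spirit as the paper's, and the combinatorics (tree walks enumerated via Narayana numbers) lands in the same place. The paper organizes the argument differently, though: rather than classifying \emph{walks} by whether their trace is a tree, it shows (Lemma~\ref{lem:tree}) that with probability $1-o(n^{-5/4})$ all but an $n^{-1/4}$-fraction of \emph{vertices} have acyclic $r$-neighborhoods. For such a vertex $v$, every closed walk of length $\leq r$ from $v$ is a walk on the infinite $(d_L,d_R)$-biregular tree, so the count is exactly $B_r$ or $C_r$; the few bad vertices are handled by the crude bound $A^r(v,v)\leq d_R^r$. The paper also works with the moments of $d_R^{-1/2}A$ rather than of $d_R^{-1}X^TX$ directly, and passes to the latter at the end via the singular-value relation. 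Your direct route is a bit more economical combinatorially; the paper's vertex-level decomposition buys a cleaner concentration step.

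That concentration step is where your proposal has a genuine gap. The paper explicitly allows the graphs $G_n$ at different $n$ to have \emph{arbitrary} joint distribution (see the discussion in Section~\ref{prelim}), so there is no ``deterministic interpolation'' available: $G_n$ and $G_{n'}$ need not be related in any way, and convergence along a subsequence says nothing about the intervening terms. You therefore need $\sum_n \P\bigl(|\int x^k\,d\mu_n - m_k|>\epsilon\bigr)<\infty$ directly. Your variance bound $O((nd_R)^{-1})$ gives, via Chebyshev, tail probabilities of order $(nd_R)^{-1}$, which is not summable when $d_R$ grows more slowly than $\log n$---and the hypotheses only require $d_R\to\infty$. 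The paper avoids this by bounding the number $N_r^*$ of bad vertices: since $\E[(N_r^*)^2]=O(d_R^{4r})$ and $d_R=o(n^\epsilon)$ by \eqref{eq:dc2}, Markov's inequality yields the summable bound $o(n^{-5/4})$ immediately. You could repair your argument either by computing a higher (say fourth) moment of the deviation, or by adopting the paper's tree-neighborhood device.
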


  Theorem \ref{thm:sq} agrees with the results of \cite{MS}, in which Mizuno and Sato derive the 
  limiting distribution of the eigenvalues of a sequence of deterministic biregular graphs with girths growing to
  infinity. They do so using the Ihara zeta function, and they express
  their result  in a different form from ours, which 
  turns out to be equivalent.  
It should be noted that our result is much stronger
  than theirs; as we will see, \emph{most} bipartite biregular
  graphs have small girth, but this does not affect the
  convergence of the ESD.

  As mentioned above, the ESD of $d_R^{-1}X^TX$ is the distribution
  of the squares of the nontrivial eigenvalues of $d_R^{-1/2}A$.
  We can thus find the limiting distribution for the ESD
  of this matrix as well:
  \begin{cor}\label{cor:mudist}
    The ESD of $d_R^{-1/2}A$ converges almost surely to the distribution 
    $\mu$ supported on $[-b,-a]\cup[a,b]$ and given on that set by
    the density
    \begin{align}
      \frac{2}{1+y}p(x^2)|x|=\frac{y}{(1+y)\pi|x|}
      \sqrt{(b^2-x^2)(x^2-a^2)},\label{eq:density}
    \end{align}
    along with a point mass of $\frac{y-1}{y+1}$ at $0$.
  \end{cor}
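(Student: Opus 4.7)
The plan is to deduce Corollary~\ref{cor:mudist} directly from Theorem~\ref{thm:sq} by exploiting the bipartite spectral decomposition. From the block form of $A$, its spectrum consists of the pairs $\pm\sqrt{\lambda_i}$, one pair for each of the $n$ eigenvalues $\lambda_i$ of $X^T X$, together with $m-n$ additional zero eigenvalues; the totals $2n + (m-n) = m+n$ match the size of $A$.

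Let $\mu_n$ be the ESD of $d_R^{-1} X^T X$ and let $\nu_{m+n}$ be the ESD of $d_R^{-1/2} A$. For any bounded continuous $f\colon\RR\to\RR$,
$$\int f\, d\nu_{m+n} \;=\; \frac{n}{m+n}\int_0^\infty\bigl(f(\sqrt x)+f(-\sqrt x)\bigr)\,d\mu_n(x) \;+\; \frac{m-n}{m+n}\,f(0).$$
Edge counting yields $m\,d_L = n\,d_R$, so hypothesis~(\ref{eq:dc3}) gives $m/n\to y$, hence $n/(m+n)\to 1/(1+y)$ and $(m-n)/(m+n)\to (y-1)/(y+1)$. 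The symmetrization $g(x) := f(\sqrt x)+f(-\sqrt x)$ is bounded and continuous on $[0,\infty)$ (the common support of all $\mu_n$ and of the limiting Mar\v{c}enko-Pastur law), so Theorem~\ref{thm:sq} applied to $g$ gives, almost surely,
$$\int f\, d\nu_{m+n}\;\to\;\frac{1}{1+y}\int_{a^2}^{b^2}\bigl(f(\sqrt x)+f(-\sqrt x)\bigr)\,p(x)\,dx \;+\; \frac{y-1}{y+1}\,f(0).$$

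The substitution $x = u^2$, $dx = 2|u|\,du$, applied to the two halves of the integral transforms each into an integral over $[a,b]$ or $[-b,-a]$, and identifies the right-hand side as $\int f\,d\mu$, where $\mu$ has density $\tfrac{2}{1+y}p(u^2)|u|$ on $[-b,-a]\cup[a,b]$ and a point mass of $\tfrac{y-1}{y+1}$ at $0$. Since this holds almost surely for every bounded continuous $f$, we obtain almost-sure convergence of $\nu_{m+n}$ to $\mu$ in the sense defined in Section~\ref{prelim}. There is no real obstacle here beyond bookkeeping; the entire argument is the elementary reduction above plus a change of variables that matches the form of the density claimed in the corollary.
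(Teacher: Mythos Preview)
Your argument is correct. The only technical nicety is that the definition of almost-sure convergence in Section~\ref{prelim} quantifies over bounded continuous $f\colon\RR\to\RR$, so strictly speaking you should extend $g(x)=f(\sqrt{x})+f(-\sqrt{x})$ from $[0,\infty)$ to all of $\RR$ (say, by a constant for $x<0$); since all the measures involved live on $[0,\infty)$ this changes nothing.

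Your route differs from the paper's. The paper does \emph{not} deduce the corollary from Theorem~\ref{thm:sq}; instead it goes back to Proposition~\ref{thm:lln}, which computes the almost-sure limits of the moments $\int x^k\,d\mu_n$ of the ESD of $d_R^{-1/2}A$ directly, and then verifies by the substitution $u=x^2$ that the candidate measure $\mu$ has exactly those moments (the odd ones vanish by symmetry). Convergence then follows by the method of moments, since $\mu$ has compact support. Your argument instead uses the weak convergence already packaged in Theorem~\ref{thm:sq} and pushes it through the deterministic spectral identity relating $A$ and $X^TX$; this avoids re-invoking the moment method and is arguably the more direct way to obtain a corollary from the theorem as stated. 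The paper's approach, on the other hand, makes the logical dependence clearer: both Theorem~\ref{thm:sq} and Corollary~\ref{cor:mudist} sit as parallel consequences of the moment computation in Proposition~\ref{thm:lln}, rather than one being derived from the other.
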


  It is also known that when $d\to\infty$, the ESD of random $d$-regular
  graphs converges to the semicircle law on short scales (see
  \cite{DuP}, \cite{TVW}).  In Section 
  \ref{sec:locallaw}, we prove this
  for the biregular case, under slightly different
  conditions on the growth of $d_R$:
  \begin{thm}\label{thm:locallaw}
    Let $G$ be a random $(d_L,d_R)$-biregular bipartite graph on
    $m+n$ vertices satisfying \eqref{eq:dc1}--\eqref{eq:dc3},
    as well as the more stringent condition
    $d_R=\exp(o(1)\sqrt{\log n})$.
    Fix $\eps>0$.  Let $A$ be the adjacency matrix
    of $G$ and $\mu_n$ be the ESD
    of $(d_R-1)^{-1/2}A$, and let $\mu$ be the limiting ESD of
    Corollary~\ref{cor:mudist}.
    There exists a constant $C_\eps$ such that
    for all sufficiently large $n$ and $\delta>0$,
    for any interval $I\subset\RR$ 
    avoiding
    $[-\eps,\eps]$ and with length $|I|\geq\max\big(2\eta,\eta/(-\delta
    \log\delta)\big)$, it holds that
    \begin{align*}
      |\mu_n(I)-\mu(I)|<\delta C_\eps|I|
    \end{align*}
    with probability $1-o(1/n)$.  The quantity $\eta$, which gives the
    minimum length of an interval $I$ that we consider, is given by
    the following series of definitions:
    \begin{align*}
      a &= \min\left(\frac{\log n}{9(\log d_R)^2}, d_R\right),\\
      r &= e^{1/a},\\
      \eta &= r^{1/2}-r^{-1/2}.
    \end{align*}
  \end{thm}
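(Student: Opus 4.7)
The plan is to follow the template of \cite{DuP} for random $d$-regular graphs, adapted to the biregular bipartite setting. We work with $B := (d_R-1)^{-1/2}A$, whose ESD is $\mu_n$, and aim to control $\mu_n(I)$ for intervals $I$ avoiding $[-\eps,\eps]$ by sandwiching $\mathbf{1}_I$ between polynomials and then applying trace-moment estimates.

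First I would construct polynomials $P^-,P^+$ of degree $O(a)$ satisfying $P^-(x)\leq \mathbf{1}_I(x)\leq P^+(x)$ on the essential support of $\mu_n$, with $\int(P^+-P^-)\,d\mu \leq \tfrac{1}{2}C_\eps\delta|I|$. A Chebyshev-type approximation keyed to the support of $\mu$ (from Corollary~\ref{cor:mudist}) does this; the threshold $\eta = r^{1/2}-r^{-1/2}\approx 1/a$ reflects the natural node spacing of degree-$a$ polynomials, and the factor $1/(-\delta\log\delta)$ encodes the standard logarithmic loss when sharpening the sandwich quality to $\delta$. The distance-from-zero condition avoids the singularity of $\mu$ at the origin and is what produces the $\eps$-dependent constant $C_\eps$.

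Second, I would estimate $\E[\tr P^{\pm}(B)]$ by reducing it to $\E[\tr A^k]$ for $k\leq O(a)$; each such moment is the expected number of length-$k$ closed walks in $G$. Using the appendix results on the distribution of cycles, with high probability a depth-$k$ neighborhood of a typical vertex is tree-like, so the dominant contribution comes from closed walks on the infinite $(d_L,d_R)$-biregular tree. These walks generate the moments of $\mu$ exactly, so summing the main term and the tree-excess correction gives $\E[\tr P^{\pm}(B)] = N\int P^{\pm}\,d\mu + o(\delta|I|N)$. The cap $a \leq \log n/(9(\log d_R)^2)$ together with $d_R = \exp(o(1)\sqrt{\log n})$ is precisely calibrated so that $d_R^{k/2}$ stays small compared to $n^{1/2}$ for all $k\leq O(a)$, preserving the tree approximation up to the required polynomial degree.

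Third, I would bound $\var\bigl(\tr P(B)\bigr)$. Expanding in monomials, this decomposes into variances $\var(\tr A^k)$ and covariances $\E[\tr A^j\tr A^k]-\E[\tr A^j]\E[\tr A^k]$, each of which reduces to counting pairs of closed walks that share nontrivial structure. Invoking Poisson-type asymptotics for short cycles in random biregular bipartite graphs (from the appendix), one obtains a variance small enough that Chebyshev's inequality yields concentration around the mean with probability $1-o(1/n)$; combined with the sandwich, this delivers $|\mu_n(I)-\mu(I)|<\delta C_\eps|I|$ on the desired event. The main obstacle is this variance step: extending the cycle-pair analysis of \cite{DuP} to the biregular bipartite case demands careful bookkeeping with two vertex classes, two distinct degrees, and the parity constraints on closed-walk lengths, together with uniform control of the Poisson-approximation error over the full range $k\leq O(a)$. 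The strengthened growth condition on $d_R$ exists precisely to keep both the tree approximation and this variance bound valid simultaneously throughout that range.
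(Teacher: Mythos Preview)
Your proposal takes a genuinely different route from the paper. The paper does \emph{not} prove Theorem~\ref{thm:locallaw} by polynomial sandwiching of $\mathbf{1}_I$ and trace-moment/variance estimates; it uses the Stieltjes transform. Concretely, the paper (i) computes the diagonal resolvent entries of finite $(d_L,d_R)$-biregular trees via a continued-fraction recursion that collapses to shifted Chebyshev polynomials $q_\zeta$, (ii) uses a Schur-complement argument to show that for any vertex with an acyclic $(2\zeta+1)$-neighborhood the corresponding diagonal entry of $((d_R-1)^{-1/2}A-z)^{-1}$ is within $O_\eps(1/d_R)$ of the tree value, (iii) shows that with probability $1-o(1/n)$ all but an $\eta/d_R$ fraction of vertices have such neighborhoods, giving $\sup_{z\in U}|s_n(z)-s(z)|\leq C_\eps/d_R$ on the set $U=\{\Re z\geq\eps,\ \Im z\geq\eta\}$, and (iv) transfers this to intervals via the Poisson-kernel smoothing $F(y)=\tfrac{1}{\pi}\int_I\frac{\eta}{\eta^2+(y-x)^2}\,dx$ as in \cite[Lemma~64]{TaV}. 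In particular, the quantities $r=e^{1/a}$ and $\eta=r^{1/2}-r^{-1/2}$ are not node spacings of approximating polynomials; $r$ is a lower bound on $|w(z)|$ in the Joukowski parametrization of the Chebyshev resolvent, and $\eta$ is exactly the smallest imaginary part guaranteeing $|w(z)|\geq r$ (their Lemma on the ellipse). Note also that your opening claim to ``follow the template of \cite{DuP}'' is off: \cite{DuP} itself uses the Stieltjes-transform method, and the present paper explicitly says it adopts the same approach.

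Your moment/sandwich strategy is not unreasonable in principle, but two of your steps are under-supported. First, the appendix gives mean and variance for the cycle counts $X_r$, not for $\tr A^k$ (closed walks); controlling $\var(\tr P(B))$ requires covariances of closed-walk counts across different lengths, which is a substantially larger combinatorial computation than what Proposition~\ref{prop:ev} supplies. Second, the precise form of the length threshold $|I|\geq\max(2\eta,\eta/(-\delta\log\delta))$ is a by-product of the Poisson-kernel argument in \cite{TaV}; reproducing the \emph{same} threshold from a Chebyshev polynomial sandwich would require a separate quantitative approximation lemma that you have not stated. The Stieltjes route bypasses both issues: no second-moment calculation for traces is needed (the high-probability event is purely about cycle counts), and the interval estimate comes for free from the smoothing kernel once the transform is controlled.
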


  This theorem is obscured by the technicalities in its statement,
  so we give some discussion of its meaning.  
  Corollary~\ref{cor:mudist} only gives information on $\mu_n(I)$ for fixed
  $|I|$.  Theorem~\ref{thm:locallaw}, on the other hand, allows $|I|$ to shrink
  as $\eta$ does.  We note that $\eta\sim 1/a$.
  
  We restricted our intervals $I$ away from 0 to avoid complications
  caused by the point mass that $\mu$ has when $d_R/d_L\to
  y>1$.  Since the support of $\mu$ except for this mass is bounded
  away from 0, this restriction costs us nothing.

 To prove our results, we use the moment method along with Stieltjes transforms, combined with a careful examination of the local structure of the bipartite, biregular graph. 
Along the way, we need to adapt some of the results proved by McKay, Wormald, and Wysocka \cite{MWW} for random $d$-regular graphs to our $(d_L, d_R)$ biregular random graphs. We give these results below. The method used follows \cite{MWW} very closely, which is why we relegate the proofs to the appendix. 

Let $G$ be a random $(d_L,d_R)$-biregular bipartite graph on $m+n$
  vertices.  Assume  $d_L\leq d_R$, and let $\alpha=d_R/d_L$.  As always, all of these
  variables depend on $n$, and any expressions $O(\cdot)$ or $o(\cdot)$ reflect
  behavior as $n\to\infty$.  We assume that $\alpha$ converges to a finite
  value as $n\to\infty$ (this assumption is also necessary in the case
  of Wishart matrices).  Here and throughout the paper, ``cycle'' always refers to a simple
  cycle, with no repeated vertices.
  Let $X_r$ denote the number of cycles of length $2r$ in $G$. (Note that
  as $G$ is bipartite, its cycles all have even length.)
  \begin{prop}\label{prop:ev}
    Let
    \begin{align*}
      \mu_r = \frac{(d_L-1)^r(d_R-1)^r}{2r}.
    \end{align*}
    If $d_R=o(n)$, $r=O(\log n)$, and $rd_R = o(n)$,
    then
    \begin{align*}
      \E[X_r] &=
        \mu_r
        \left(1 + O\Big(\frac{r(r+d_R)}{n}\Big)\right),\\
      \var[X_r] &=    
      \mu_r\left(1+O\bigg(\frac{d_R^{2r}(r\alpha^{2r-1}+\alpha^{-r}d_R)}
          {n}\bigg)\right)
    \end{align*}
  \end{prop}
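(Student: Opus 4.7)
The strategy, following McKay, Wormald, and Wysocka \cite{MWW} adapted to the bipartite biregular setting, is to work inside the bipartite configuration (pairing) model: attach $d_L$ half-edges to each $v\in L$ and $d_R$ half-edges to each $u\in R$, producing $E = md_L = nd_R$ half-edges on each side, and sample a uniformly random matching between the two collections. Conditioning on the resulting multigraph being simple recovers the uniform measure on $(d_L,d_R)$-biregular bipartite graphs; under $d_R=o(n)$ the conditioning multiplies the moments of $X_r$ by factors $1+o(1)$ that are absorbed into the stated big-$O$ terms.

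To compute $\E[X_r]$, enumerate cycles of length $2r$ as dihedral orbits of alternating tuples $(v_1,u_1,\ldots,v_r,u_r)$ of distinct vertices, giving $[m]_r[n]_r/(2r)$ such orbits (where $[m]_r = m(m-1)\cdots(m-r+1)$). For each tuple, the number of pairings realizing the $2r$ cycle edges is $[d_L(d_L-1)]^r[d_R(d_R-1)]^r(E-2r)!$, since each cycle vertex contributes an ordered pair of half-edges and the remaining $E-2r$ half-edges on each side match arbitrarily. Dividing by $E!$ and using $md_L=nd_R=E$ to cancel $m^r n^r d_L^r d_R^r$ against $E^{2r}$ leaves $(d_L-1)^r(d_R-1)^r/(2r)=\mu_r$ as the leading term. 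The corrections $[m]_r/m^r = 1 - O(r^2/n)$, $E^{2r}(E-2r)!/E! = 1 + O(r^2/(nd_R))$, and the simplicity conditioning together assemble into the stated error $O(r(r+d_R)/n)$.

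For the variance, use $\var[X_r]=\E[X_r(X_r-1)]+\E[X_r]-\E[X_r]^2$ and expand the factorial moment as a sum over ordered pairs of distinct cycles, split by the topology of their intersection. Vertex-disjoint pairs contribute $\mu_r^2(1+o(1))$, which cancels $\E[X_r]^2$ up to the advertised error, leaving the $+\E[X_r]$ term as the Poisson-like main contribution $\mu_r$; overlapping pairs fall into a finite catalogue of types, each of which is handled by counting labeled embeddings (so that every $L$-vertex on the shared skeleton contributes a factor $d_L$ and every $R$-vertex a factor $d_R$, causing $\alpha=d_R/d_L$ to enter) and multiplying by the probability of the corresponding half-edge pairings in the matching. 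The pattern where two cycles share a single edge dominates one family of overlaps and yields the $r\alpha^{2r-1}$ contribution, while the pattern where they share a near-maximal path produces the $\alpha^{-r}d_R$ contribution; all other patterns are dominated by these two.

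The principal obstacle is the overlap enumeration in the variance: one must catalogue the topological types of intersection of two cycles of length $2r$ in a bipartite graph, count embeddings that respect the alternation $L,R,L,R,\ldots$, and verify that each contribution fits under the claimed bound. This is combinatorially heavier than the $d$-regular case in \cite{MWW} because $L$- and $R$-vertices on the shared subgraph contribute asymmetric factors of $d_L$ versus $d_R$, producing the $\alpha$-dependent terms. By contrast, the expectation calculation is a clean one-parameter count, and the transfer from the configuration model to simple graphs is standard under $d_R=o(n)$; the heart of the argument is the asymmetric bipartite bookkeeping in the variance.
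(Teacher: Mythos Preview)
Your configuration-model approach is a valid alternative, but it is genuinely different from the paper's route. The paper works directly in the uniform model on simple biregular bipartite graphs, invoking McKay's subgraph-probability bounds \cite{McK} (restated here as Proposition~\ref{thm:mcgbound} and specialized to cycles in Corollary~\ref{thm:mcbound}) to obtain matching upper and lower bounds on $\P[L\subset G]$ for any fixed subgraph $L$. This sidesteps the conditioning-on-simplicity step entirely: there is no pairing model, and the error terms are read off from McKay's inequalities rather than assembled from a configuration-model count plus a transfer. Your approach can be made to work, but the assertion that conditioning on simplicity multiplies the moments by $1+o(1)$ is not free under the weak hypothesis $d_R=o(n)$: the simplicity probability in the bipartite pairing model is of order $\exp\bigl(-\Theta(d_Ld_R)\bigr)$ and tends to zero, so a crude ratio bound fails and one must prove asymptotic independence of $X_r$ from the simplicity event, as in \cite{MWW}, and then check that the resulting correction actually fits inside the stated $O(r(r+d_R)/n)$.

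Two details in your variance sketch are inaccurate. First, the catalogue of overlap types is not finite: the paper parametrizes pairs $(C_1,C_2)$ sharing at least one edge by $(p,j)$, where $p$ is the number of components and $j$ the number of edges of $C_1\cap C_2$, and both range up to $2r$; the total contribution is bounded by a double sum over $(p,j)$ whose dominant term is $p=j=1$ (a single shared edge), and this is what produces the $\alpha^{2r-1}$ piece. Second, the $\alpha^{-r}d_R$ term does \emph{not} arise from a near-maximal-path overlap. In the paper's decomposition it comes from the $O(r(r+d_R)/n)$ error already present in the edge-disjoint and diagonal contributions---that is, from the imperfect cancellation between the $C_1$-sum and $\E[X_r]^2$---after multiplying by $\mu_r$ and rewriting. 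The edge-overlapping sum contributes only the $\alpha^{2r-1}$-type term.
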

  
  In Section~\ref{sec:main}, we use this proposition
  to show that with high probability, our biregular bipartite graph
  $G$ is locally well approximated by a tree.  This allows us
  to approximate the traces of the adjacency matrix of $G$, thus
  computing the moments of its ESD.
  Finally, we refine these results
  in Section~\ref{sec:locallaw} to prove local convergence on vanishing-length
  intervals.
  To prove this theorem, we give estimates on the rate of convergence
  of the Stieltjes transform of the ESD, the same approach used in
  \cite{DuP} and \cite{TaV}.

  \section{Global convergence to the Mar\v{c}enko-Pastur law}\label{sec:main}
  To find the limiting ESD of a biregular bipartite graph $G$, we will
  first show that in a sense that we will make precise, 
  most neighborhoods in $G$ have no cycles and are trees.
  This will allow us to estimate the traces of the adjacency matrix of $G$,
  and we will find the limit of these as $G$ grows 
  with a combinatorial argument.
  
  For this entire section, let $G$ be a random $(d_L,d_R)$-biregular
  bipartite graph on $n+m$ vertices, and assume
  that conditions \eqref{eq:dc1}--\eqref{eq:dc3}
  on
  the growth of $d_L$ and $d_R$ hold.  As before, let $\alpha=\frac{d_R}{d_L}$.
  
  We make precise the property of $G$ being locally a tree in the following
  lemma:
  \begin{lemma}\label{lem:tree}
    Let $r$ be fixed, and let $\tau$ be the set of vertices in $G$ whose
    $r$-neighborhoods contain no cycles.  Then, if $d_R$ satisfies
    \eqref{eq:dc1} and \eqref{eq:dc2},
    \begin{align*}
      \P\left[1-\frac{|\tau|}{n+m}>n^{-1/4}\right]=o\big(n^{-5/4}\big)
    \end{align*}
  \end{lemma}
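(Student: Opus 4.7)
The plan is to bound $B := N - |\tau|$ (the number of bad vertices, with $N := n+m$) by the number of short cycles in $G$, and then to apply Markov's inequality to $B^2$. The key combinatorial step is to show that if the $r$-ball around some vertex $v$ contains any cycle, it contains a simple cycle of length at most $2r$. I would establish this by a BFS argument: the BFS tree rooted at $v$ (restricted to the ball) necessarily misses some edge $(u,w)$ of the ball, and this edge together with the BFS paths $u \leadsto L \leadsto w$, where $L$ is the lowest common ancestor of $u$ and $w$, forms a simple cycle of length $d(u,L) + d(L,w) + 1 \leq 2r+1$; bipartiteness improves this to $\leq 2r$.

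Given this reduction, I would bound $B$ by summing over short cycles. For each cycle $C$ of length $2k \leq 2r$, the set of vertices $v$ whose $r$-ball contains $C$ is contained in the $r$-ball of any fixed vertex of $C$, so has size at most $B_r = O(d_R^r)$, the maximum $r$-ball size in a $(d_L,d_R)$-biregular bipartite graph. Letting $X_k$ be the number of $2k$-cycles in $G$, I obtain the deterministic bound $B \leq B_r \sum_{k=1}^{r} X_k$, and Cauchy--Schwarz gives $\E[B^2] \leq r B_r^2 \sum_{k=1}^{r} \E[X_k^2]$.

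I would then invoke Proposition~\ref{prop:ev}. Under \eqref{eq:dc1}--\eqref{eq:dc3} and with $r$ fixed, the relative error terms in the variance estimate are $o(1)$, so $\var(X_k) = O(\mu_k)$ and hence $\E[X_k^2] = O(\mu_k^2) = O(d_R^{4k})$. Combined with $B_r^2 = O(d_R^{2r})$, this gives $\E[B^2] = O(d_R^{6r})$, which by \eqref{eq:dc2} is $o(n^{\eps})$ for every $\eps>0$ (since $r$ is fixed). Since $m = \alpha n$ with $\alpha \to y < \infty$, we have $N = \Theta(n)$, and Markov's inequality on $B^2$ yields
\[
  \P\!\left[\frac{B}{N} > n^{-1/4}\right] \leq \frac{\E[B^2]}{N^2 n^{-1/2}} = \frac{o(n^{\eps})}{\Theta(n^{3/2})} = o(n^{-5/4})
\]
upon choosing any $\eps<1/4$.

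The main obstacle is the first-step reduction: a priori a cycle in the $r$-ball could have length up to $O(B_r) = O(d_R^r)$, which is far too large for Proposition~\ref{prop:ev} to control efficiently, and the BFS-tree trick is what keeps us in the regime $k \leq r$. Once this reduction is in place, the moment computation is routine; note that the first moment alone only yields $o(n^{\eps - 3/4})$, so the second moment is essential to clear the $n^{-5/4}$ threshold.
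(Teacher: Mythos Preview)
Your argument is correct and follows essentially the same route as the paper: bound the number of bad vertices by a weighted sum of short-cycle counts, control the second moment via Proposition~\ref{prop:ev}, and apply Markov's inequality. The paper uses the slightly sharper bound $N_r^*=\sum_{s=2}^r 2s(d_R-1)^{r-s}X_s$ (exploiting that a $2s$-cycle must lie within distance $r-s$ of $v$) in place of your $B_r\sum_k X_k$, yielding $\E[(N_r^*)^2]=O(d_R^{4r})$ rather than $O(d_R^{6r})$, but since $r$ is fixed and $d_R=o(n^\eps)$ this makes no difference.
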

  \begin{proof}
    This is the same statement proven in \cite{DuP} for regular graphs, 
    and using Proposition~\ref{prop:ev} we can prove it in the same way.
    If a vertex is not in $\tau$, then for some $s\leq r$ there exists
    a $2s$-cycle within $r-s$ of the vertex.  The size of all 
    $(r-s)$-neighborhoods of $2s$-cycles hence serves as a bound on the
    number of ``bad'' vertices.  For any given $2s$-cycle, the size
    of its $(r-s)$-neighborhood is bounded by $2s(d_R-1)^{r-s}$.
    If we define
    \begin{align*}
		  N_r^* = \sum_{s=2}^r2s(d_R-1)^{r-s}X_s,
    \end{align*}
    then this gives us the bound $n+m-|\tau|\leq N_r^*$.
    
    Now we compute $\E[N^*_r]$ and $\var[N^*_r]$.  Using our expression for
    $\E[X_r]$ from Proposition~\ref{prop:ev},
    \begin{align*}
		  \E[N_r^*] &= \sum_{s=2}^r 2s(d_R-1)^{r-s}(d_L-1)^s(d_R-1)^sO(1)\\
		     &=(d_R-1)^r\sum_{s=2}^rO\left((d_L-1)^s\right)\\
		     &=O((d_L-1)^r(d_R-1)^r)=O\big(d_R^{2r}\big) 
    \end{align*}
		To compute the variance, first notice that \eqref{eq:dc2} implies
    that $\var[X_s]= \mu_s(1+o(1))$.  By Cauchy-Schwarz,
		\begin{align*}
		  \var[N_r^*]&\leq r\sum_{s=2}^r 4s^2(d_R-1)^{2r-2s}\var[X_s]\\
		    &\leq r\sum_{s=2}^r 4s^2(d_R-1)^{2r-2s}\mu_s\left(1+
        o(1)\right)\\
        &=r(d_R-1)^{2r}\sum_{s=2}^rs\frac{(d_L-1)^s(d_R-1)^s}{(d_R-1)^{2s}}
          (1+o(1))\\
        &\leq r(d_R-1)^{2r}(1+o(1))\sum_{s=2}^rs\\
        &=O\big(d_R^{2r}\big)
    \end{align*}
    The rest of the lemma follows from Markov's inequality:
    \begin{align*}
      \P\left[1-\frac{|\tau|}{n+m}>n^{-1/4}\right]
        &=\P\big[n+m-|\tau|>(1+\alpha)n^{3/4}\big]\\
        &\leq \P\big[N_r^*>(1+\alpha)n^{3/4}\big]\\
        &\leq \frac{\var[N_r^*]+\E[N_r^*]^2}{(1+\alpha)^2n^{3/2}}\\
        &=O\left(\frac{d_R^{4r}}{n^{3/2}}\right)=o\big(n^{-5/4}\big).
        \tag*{\qedhere}
    \end{align*}
  \end{proof}
  This result shows that there are few ``bad'' vertices.
  It easily follows that this is true within the left and the right
  vertex classes of $G$ as well.
  \begin{cor}\label{cor:tree}
    Let $\tau_L$ and $\tau_R$ be the number of vertices in the left
    and right
    classes of $G$, respectively, with acyclic $r$-neighborhoods.
    Then
    \begin{align*}
      \P\left[\frac{m-|\tau_L|}{n+m}>n^{-1/4}\right]&=o(n^{-5/4}),\\
		  \P\left[\frac{n-|\tau_R|}{n+m}>n^{-1/4}\right]&=o(n^{-5/4}).
    \end{align*}
  \end{cor}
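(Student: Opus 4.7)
The plan is straightforward: since the ``bad'' vertices counted by Lemma~\ref{lem:tree} split cleanly across the two vertex classes, the probability estimate there transfers immediately to each class by monotonicity of probability. I do not expect a genuine obstacle here, which is presumably why the authors present this as a corollary rather than a standalone lemma.

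First I would recall the set $\tau$ from Lemma~\ref{lem:tree}, and set $\tau_L = \tau \cap L$ and $\tau_R = \tau \cap R$, so that $|\tau| = |\tau_L| + |\tau_R|$ by disjointness of $L$ and $R$. The number of vertices in $L$ with acyclic $r$-neighborhood is then $|\tau_L|$, matching the quantity in the statement; similarly for $R$. Hence the counts of ``bad'' vertices in the two classes, namely $m - |\tau_L|$ and $n - |\tau_R|$, are nonnegative integers that sum to $(n+m) - |\tau|$.

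Since each nonnegative summand is at most the sum, I would obtain the pointwise bounds $m - |\tau_L| \leq (n+m) - |\tau|$ and $n - |\tau_R| \leq (n+m) - |\tau|$. Dividing through by $n+m$ and applying monotonicity, the event $\{(m-|\tau_L|)/(n+m) > n^{-1/4}\}$ is contained in the event $\{1 - |\tau|/(n+m) > n^{-1/4}\}$, whose probability is $o(n^{-5/4})$ by Lemma~\ref{lem:tree}; the identical argument handles $\tau_R$. This would complete the proof, with the only ``work'' being the decomposition observation above.
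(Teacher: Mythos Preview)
Your proposal is correct and matches the paper's proof essentially line for line: both observe that $1-\frac{|\tau|}{n+m}=\frac{m-|\tau_L|}{n+m}+\frac{n-|\tau_R|}{n+m}$ with nonnegative summands, so each class's bad-vertex event is contained in the event from Lemma~\ref{lem:tree}.
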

  \begin{proof}
    Note that $1-\frac{|\tau|}{n+m}=\frac{m-|\tau_L|}{n+m}
    +\frac{n-|\tau_R|}{n+m}$,
    so $1-\frac{|\tau|}{n+m}>c$ whenever $\frac{m-|\tau_L|}{n+m}>c$
    or $\frac{n-|\tau_R|}{n+m}>c$.
  \end{proof}
    
  Let $\beta_k(r,\sigma^2)$ be the $k$th moment of the
  Mar\v{c}enko-Pastur
  law with ratio $r$ and scaling factor $\sigma^2$ as defined
  in \cite{Bai}.
  \begin{prop}\label{thm:lln}
    Let $A$ be the adjacency matrix of $G$, and let
    $\mu_n$ be the ESD of $d_R^{-1/2}A$.
    Recalling that $y=\limn\alpha$, 
    \begin{align*}
      &\int x^{2k+1}\,d\mu_n(x)\to 0\ \text{a.s.,}\\
      &\int x^{2k}\,d\mu_n(x)\to \frac{2}{1+y}
        \beta_k(y^{-1},1)\ \text{a.s.}
    \end{align*}
    as $n\to\infty$.
  \end{prop}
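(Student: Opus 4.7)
The plan is to express the $k$-th moment as a normalized trace, $\int x^k\,d\mu_n(x)=\tr(A^k)/((n+m)d_R^{k/2})$, and to interpret $\tr(A^k)$ as counting closed walks of length $k$ in $G$, summed over the starting vertex. Because $G$ is bipartite, every closed walk has even length, so the trace vanishes identically for odd $k$, and the odd-moment statement holds exactly (not just in the limit).

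For even $k=2j$, I would partition the vertex set into the ``tree'' vertices $\tau_L\cup\tau_R$ (those whose $j$-neighborhood contains no cycle) and the remaining ``bad'' vertices. A closed walk of length $2j$ from $v$ lies in the $j$-neighborhood of $v$, so for $v\in\tau_L$ its $j$-neighborhood is isomorphic as a rooted graph to the radius-$j$ truncation of the infinite $(d_L,d_R)$-biregular tree rooted at a left vertex, and the walk count equals a deterministic quantity $T_{2j}^L$ depending only on $d_L$ and $d_R$ (similarly $T_{2j}^R$ for $v\in\tau_R$). The crude bound $d_R^{2j}$ on the number of walks of length $2j$ from any vertex, combined with Corollary~\ref{cor:tree} applied at $r=j$, bounds the bad-vertex contribution to $\tr(A^{2j})$ by $O((n+m)n^{-1/4}d_R^{2j})$ off an event of probability $o(n^{-5/4})$; after dividing by $(n+m)d_R^j$ and invoking $d_R=o(n^\epsilon)$ from~\eqref{eq:dc2} (choose $\epsilon<1/(4j)$), this is $o(1)$.

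The combinatorial heart of the argument is the computation of $\lim T_{2j}^L/d_R^j$ and $\lim T_{2j}^R/d_R^j$, which I would handle via generating functions. With $u$ marking half-length, the standard first-step decomposition of excursions yields the system $g_L=1/(1-d_L u G_R)$, $g_R=1/(1-d_R u G_L)$, $G_L=1/(1-(d_L-1)u G_R)$, $G_R=1/(1-(d_R-1)u G_L)$, where $G_L,G_R$ are the analogous generating functions at a root with one neighbor forbidden. Setting $u=v/d_R$ and letting $d_R\to\infty$ with $d_R/d_L\to y$, the ``$-1$''s become negligible and the limit functions $\widetilde G_L,\widetilde G_R$ satisfy $\widetilde G_L=1+(v/y)\widetilde G_L\widetilde G_R$ and $\widetilde G_R=1+v\widetilde G_L\widetilde G_R$, so $H:=\widetilde G_L\widetilde G_R$ obeys the quadratic $v^2H^2+((1+y)v-y)H+y=0$. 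A short series expansion shows $[v^{j-1}]H=\beta_j(y^{-1},1)$, the $j$-th Mar\v{c}enko--Pastur moment. Since $\widetilde g_L=\widetilde G_L$ and $\widetilde g_R=\widetilde G_R$ to leading order, the weighted combination $\frac{y}{1+y}\widetilde g_L+\frac{1}{1+y}\widetilde g_R$ telescopes to $1+\frac{2v}{1+y}H$, whose $v^j$-coefficient for $j\geq 1$ is exactly $\frac{2}{1+y}\beta_j(y^{-1},1)$, as required.

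The final step is to upgrade convergence in mean to almost-sure convergence, which is routine: the tail bound $o(n^{-5/4})$ in Corollary~\ref{cor:tree} is summable along $n\in\mathbb{N}$, so Borel--Cantelli gives $|\tau_L|/(n+m)\to y/(1+y)$ and $|\tau_R|/(n+m)\to 1/(1+y)$ almost surely, and the bad-vertex error is almost surely $o(1)$ for the same reason. The main obstacle is the algebraic computation in the third paragraph, namely matching the limit of the two-variable biregular-tree walk generating functions to the Mar\v{c}enko--Pastur moment generating function; the rest is standard moment-method bookkeeping supported by the local-tree approximation lemma.
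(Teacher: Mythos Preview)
Your structural outline matches the paper's proof: trace as closed-walk count, split into tree-like vertices $\tau_L,\tau_R$ versus bad vertices via Corollary~\ref{cor:tree}, crude $d_R^{2j}$ bound on bad contributions, and Borel--Cantelli from the summable $o(n^{-5/4})$ tail. Your treatment of odd moments is in fact cleaner than the paper's: since $G$ is bipartite, $\tr(A^{2k+1})=0$ identically, whereas the paper routes this through $B_{2k+1}=C_{2k+1}=0$ on the tree.

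The genuine difference is in the even-moment combinatorics. The paper computes the tree walk counts $B_{2k},C_{2k}$ explicitly via a bijection between closed walks and ballot sequences, then invokes the fact that ballot sequences with $a$ ones at even positions are counted by Narayana numbers $N(k,a)$ (Lemmas~\ref{lem:dum}--\ref{lem:treenumbers}); this yields finite-$d$ formulas like $B_{2k}=\sum_a (d_R-1)^a\widetilde d_L^{\,k-a}N(k,a)$, whose limits are read off from the Narayana expression \eqref{eq:MPmoment} for $\beta_k(y^{-1},1)$. Your route is analytic: you set up the standard first-return system for $g_L,g_R,G_L,G_R$, pass to the scaling limit $u=v/d_R$, and solve the resulting quadratic for $H=\widetilde G_L\widetilde G_R$, identifying its coefficients with the Mar\v{c}enko--Pastur moments. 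Both are correct. The paper's bijective approach gives transparent finite-$d$ control and makes the appearance of Narayana numbers explicit; your generating-function approach is more mechanical and avoids the combinatorial lemmas entirely, at the mild cost of needing to justify that coefficient-wise limits commute with the algebraic relations (which is routine, since each $[u^j]g_L$, $[u^j]g_R$ is a polynomial in $d_L,d_R$).
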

  \begin{proof}
    Consider the infinite $(d_L,d_R)$-biregular tree.
    Let $B_r$ denote the number of closed walks of length $r$
    on this tree, starting from some fixed vertex of degree $d_L$,
    and let $C_r$ denote the number of closed walks of length $r$
    starting from a vertex of degree $d_R$.  Note that as $d_L$
    and $d_R$ depend on $n$, so do $B_r$
    and $C_r$.
    
    First, we formulate the $r$th moment of $\mu_n$ in terms of $B_r$ and
    $C_r$.
    \begin{align*}
      \int x^r\, d\mu_n(x) = \frac{d_R^{-r/2}}{n+m}\sum_{v\in V(G)}
         A^r(v,v).
    \end{align*}
      The quantity $A^r(v,v)$ is the number of closed walks
      of length $r$
      from $v$ in $G$.  With the same definitions
      of $\tau$, $\tau_L$, and $\tau_R$ as in Lemma~\ref{lem:tree}
      and Corollary~\ref{cor:tree},
      this is equal to
      $B_r$ when $v\in\tau_L$ and  $C_r$ when $v\in\tau_R$.
      For $v\not\in\tau$, we can use the bound $A^r(v,v)\leq
      d_R^r$.  Hence we can bound the $r$th moment
      of $\mu_n$ by
      \begin{align*}
        \frac{d_R^{-r/2}}{n+m}\big(|\tau_L|B_r+&|\tau_R|C_r\big)
        \leq \int x^r\, d\mu_n(x)\\&\leq
        \frac{d_R^{-r/2}}{n+m}\big(mB_r+nC_r+(n+m-|\tau|)d_R^r\big).
      \end{align*}
      Define
      \begin{align*}
        a_n &= d_R^{-r/2}\left(\Big(\frac{m}{n+m}-n^{-1/4}\Big)B_r+
        \Big(\frac{n}{n+m}-n^{-1/4}\Big)C_r\right),\\
        b_n &= d_R^{-r/2}\left(\frac{mB_r+nC_r}{n+m}
          +\frac{n^{-1/4}}{n+m}d_R^r\right).
      \end{align*}
      Now
      \begin{align*}
        \P\left[a_n\leq \int x^r\, d\mu_n(x)\right]&\geq
        \P\left[\text{$\frac{m-|\tau_L|}{n+m}\leq n^{-1/4}$
        and $\frac{n-|\tau_R|}{n+m}\leq n^{-1/4}$}\right]\\
        &=1-o(n^{-5/4}),
      \end{align*}
      and in the same way, $\P\left[\int x^r\, d\mu_n(x)\leq b_n\right]
      \geq 1-o(n^{-5/4})$.
      By the Borel-Cantelli~lemma, it holds almost surely that
      $a_n\leq\int x^r\, d\mu_n(x)\leq b_n$ for all but finitely many $n$.
      If we show that $a_n$ and $b_n$ converge to a common limit (which
      we will do next), 
      it will follow that 
      $\int x^r\, d\mu_n(x)$ converges to this limit almost surely.
          
      To find the limits of $a_n$ and $b_n$ as $n\to\infty$,
      we first note that $n^{-1/4}d^r_R \to 0$ by 
      \eqref{eq:dc2}.  Since $B_r,C_r\leq d_R^r$, this also
      implies that $n^{-1/4}B_r\to 0$ and $n^{-1/4}C_r\to 0$.
      So, it suffices to show that
      \begin{align}
        &\frac{d_R^{-(2k+1)/2}}{n+m}(mB_{2k+1}+nC_{2k+1})
           \to 0,\label{eq:suf1}\\
        &\frac{d_R^{-k}}{n+m}(mB_{2k}+nC_{2k})\to 
           \frac{2}{1+\alpha}
           \beta_k(y^{-1},1).\label{eq:suf2}
      \end{align}
      Equation \eqref{eq:suf1} is trivial, since $B_{2k+1}=C_{2k+1}=0$.
      To prove \eqref{eq:suf2}, we introduce the 
      \emph{Narayana numbers} (see \cite[p.~237]{EC2}),
      defined as
      \begin{align*}
        N(k,a)=\frac{1}{a+1}\binom{k}{a}\binom{k-1}{a}.
      \end{align*}
      The moments of the Mar\v{c}enko-Pastur
      distribution can be given in terms of these numbers
      \cite[Lemma~3.1]{Bai}:
      \begin{align}
        \beta_k(y^{-1},1) = \sum_{r=0}^{k-1}y^{-r}N(k,r).\label{eq:MPmoment}
      \end{align}
      We will give a combinatorial argument to relate the closed
      walks on the tree to the Narayana numbers.
      We mention that another approach to proving \eqref{eq:suf2}
      is to calculate $B_r$ and $C_r$ using the spectral density of the
      infinite $(d_L,d_R)$-biregular tree, as calculated in
      \cite[(5.7)]{God}.
    
    A \emph{Motzkin path} of length $2k$ is a lattice path that
    starts at $(0,0)$, ends at $(2k,0)$, and stays above the $x$-axis;
    each step can be a rise ($\nearrow$), a fall ($\searrow$),
    or a level step ($\rightarrow$).  An \emph{alternating Motzkin
    path} is a Motzkin path that rises only on even steps and that
    falls only on odd steps.  See Figure~\ref{fig:amps} for an example
    of the five alternating Motzkin paths of length 6.
    \begin{figure}
      \begin{center}
      \begin{tikzpicture}[scale=0.8]
        \begin{scope}[xshift=-3.75cm]
          \foreach \x in {0, 2, 4}
             \fill[xshift=\x cm,black!15!white] (-2,0) rectangle (-1,1);
          \draw[dashed,very thin] (-3,0) grid (3, 1);
          \draw[very thick] (-3,0) -- (3,0);
        \end{scope}  
        \begin{scope}[xshift=3.75cm]
          \foreach \x in {0, 2, 4}
             \fill[xshift=\x cm,black!15!white] (-2,0) rectangle (-1,1);
          \draw[dashed,very thin] (-3,0) grid (3, 1);
          \draw[very thick] (-3,0) -- (0,0) -- (1,1) -- (2,0) -- (3,0);
        \end{scope}  
        \begin{scope}[xshift=-3.75cm, yshift=-1.5cm]
          \foreach \x in {0, 2, 4}
             \fill[xshift=\x cm,black!15!white] (-2,0) rectangle (-1,1);
          \draw[dashed,very thin] (-3,0) grid (3, 1);
          \draw[very thick] (-3,0) -- (-2,0) -- (-1,1) -- (0,0) -- (3,0);
        \end{scope}  
        \begin{scope}[xshift=3.75cm, yshift=-1.5cm]
          \foreach \x in {0, 2, 4}
            \fill[xshift=\x cm,black!15!white] (-2,0) rectangle (-1,1);
          \draw[dashed,very thin] (-3,0) grid (3, 1);
          \draw[very thick] (-3,0) -- (-2,0) -- (-1,1) -- (0,0) --
            (1,1) -- (2, 0) -- (3,0);
        \end{scope}  
        \begin{scope}[xshift=-3.75cm,yshift=-3cm]
          \foreach \x in {0, 2, 4}
             \fill[xshift=\x cm,black!15!white] (-2,0) rectangle (-1,1);
          \draw[dashed,very thin] (-3,0) grid (3, 1);
          \draw[very thick] (-3,0) -- (-2,0) -- (-1,1) -- (1,1) -- (2,0)
             -- (3,0);
        \end{scope}  
      \end{tikzpicture}
      \end{center}\caption{The alternating Motzkin paths
      of length 6.}\label{fig:amps}
    \end{figure}
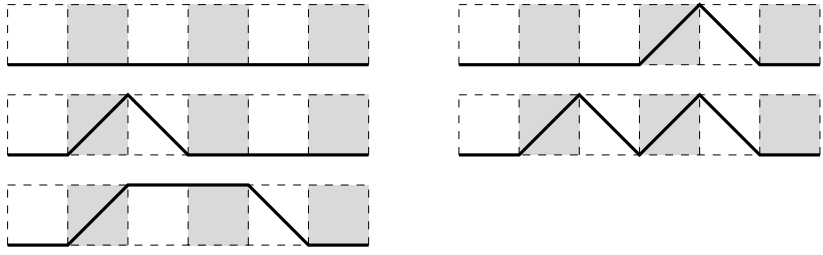
    
    The alternating Motzkin paths have the following connection
    to the Narayana numbers:
      \begin{lemma}[Lemma~6.1.7, \cite{Dum}]\label{lem:dum}
        The number of alternating Motzkin paths of length
        $2k$ with exactly $a$ rises is $N(k,a)$.
      \end{lemma}

      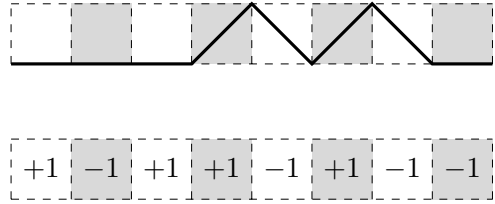
\begin{figure}
        \begin{center}
          \begin{tikzpicture}[scale=0.8]
            \foreach \x in {0, 2, 4, 6}
              \fill[xshift=\x cm,black!15!white] (-3,0) rectangle (-2,1);
            \draw[dashed,very thin] (-4,0) grid (4, 1);
            \draw[very thick] (-4,0) -- (-1,0) -- (0,1) -- (1,0)
              -- (2,1) -- (3,0) -- (4,0);
            \begin{scope}[yshift=-2.25cm]
              \foreach \x in {0, 2, 4, 6}
                \fill[xshift=\x cm,black!15!white] (-3,0) rectangle (-2,1);
              \draw[dashed,very thin] (-4,0) grid (4, 1);
              \draw (-3.5,0.5) node {$+1$};
              \draw (-2.5,0.5) node {$-1$};
              \draw (-1.5,0.5) node {$+1$};
              \draw (-0.5,0.5) node {$+1$};
              \draw (0.5,0.5) node {$-1$};
              \draw (1.5,0.5) node {$+1$};
              \draw (2.5,0.5) node {$-1$};
              \draw (3.5,0.5) node {$-1$};
            \end{scope}  
          \end{tikzpicture}
        \end{center}
        \caption{An alternating Motzkin path and its corresponding
        ballot sequence.}
      \end{figure}
    We relate the Narayana numbers to the walks on a tree
    by the following two lemmas.  A
    \emph{ballot sequence} of length $2k$
    is a sequence $x_1,\ldots,x_{2k}$ of $1$'s and $-1$'s 
    such that all partial sums $x_1+\cdots+x_j$
    are nonnegative.
    \begin{lemma}\label{lem:bal}
      The number of ballot sequences of length $2k$
      with $a$ 1's at even locations and $k-a$ 1's at odd locations
      is $N(k,a)$.
    \end{lemma}
    \begin{proof}
      We give a bijection between alternating
      Motzkin paths and ballot sequences.
      Encode the alternating Motzkin path as $p_1,\ldots,p_{2k}$,
      where $p_i$ is $1$, $0$, or $-1$ depending on whether
      the $i$th step is rising, level, or falling.
      Define a sequence by $x_i=2p_i+(-1)^{i-1}$.  We will confirm
      that this is a ballot sequence:
      each $x_i$ is either $1$ or $-1$;  for any $j$,
      \begin{align*}
        x_1+\cdots+x_i = 2(p_1+\cdots+p_i) + \sum_{i=1}^j(-1)^{i-1},
      \end{align*}
      and both of these terms are nonnegative;  
      and $x_1+\cdots+x_{2k}=2(p_1+\cdots+p_{2k})=0$.  So, $x_1,\ldots,
      x_{2k}$ is a ballot sequence.
      
      To map back from ballot sequences to alternating Motzkin paths,
      we let $p_i=(x_i-(-1)^{i-1})/2$.  For any even $j$,
      \begin{align*}
        p_1+\cdots+p_j = \frac{1}{2}(x_1+\cdots+x_j)\geq 0.
      \end{align*}
      For any odd $j$,
      \begin{align*}
        p_1+\cdots+p_j = \frac{1}{2}(x_1+\cdots+x_j-1).
      \end{align*}
      Since $x_1+\cdots+x_j\geq 1$ when $j$ is odd, this expression
      is also nonnegative.  So, our path
      stays above the $x$-axis.  The other properties of
      being an alternating Motzkin path are easy to check.
      
      This bijection takes
      alternating Motzkin paths with $a$ rises to ballot sequences
      with $a$ 1's at even locations, so the lemma follows
      from Lemma~\ref{lem:dum}.
    \end{proof}
    \newcommand{\ddL}{\widetilde{d}_L}
    \newcommand{\ddR}{\widetilde{d}_R}
    \newcommand{\ddLe}[1]{\widetilde{d}_L^{\;#1}}
    \newcommand{\ddRe}[1]{\widetilde{d}_R^{\;#1}}
    \begin{lemma}\label{lem:treenumbers}
      \begin{align}
        B_{2k}&=\sum_{a=0}^{k-1}(d_R-1)^a\ddLe{k-a}N(k,a),\label{eq:B}\\
        C_{2k}&=\sum_{a=0}^{k-1}(d_L-1)^a\ddRe{k-a}N(k,a),\label{eq:C}
      \end{align}
      for some $\ddL$ and $\ddR$ satisfying
       $d_L-1\leq\ddL\leq d_L$ and $d_R-1\leq\ddR\leq d_R$.
    \end{lemma}
    \begin{proof}
      Fix some vertex $v$ in the $(d_L,d_R)$-biregular tree with
      degree $d_L$ to serve as the root.
      We will enumerate the closed walks of length $2k$
      starting at $v$.  To any such walk we can associate a ballot
      sequence of length $2k$, given by putting a 1 at every
      step of the walk going away from $v$ and a $-1$ at every step returning
      toward $v$.  We will count the number of closed walks associated to
      each ballot sequence.
      
      Fix some ballot sequence, and suppose we are constructing a closed walk
      associated with it.
      For every 1 in the ballot sequence, our walk must go outward
      from the root.  If the 1 is at an even location,
      we have $d_R-1$ choices
      for where to move; if it is at an odd location, we have
      either $d_L$ or $d_{L}-1$, depending on whether we are moving
      from $v$ or from some other vertex.
      For every $-1$ in the ballot sequence, our walk must move
      backward towards the root, and there is no choice to be made.
      So, given a ballot sequence with $a$ rises on even steps,
      the number of closed walks from $v$ associated with that ballot sequence
      is between $(d_R-1)^a(d_L-1)^{k-a}$ and $(d_R-1)^ad_L^{k-a}$.
      Using Lemma~\ref{lem:bal} to count the number of ballot sequences
      with $r$ rises on even steps, we obtain \eqref{eq:B}.
      The same proof starting with a vertex $v$ with degree $d_R$
      gives us \eqref{eq:C}.
    \end{proof}
    
    Now we finish the proof of Proposition~\ref{thm:lln} by computing
    the limit of
    \begin{align*}
      \frac{d_R^{-k}}{n+m}\big(mB_{2k}+nC_{2k})
    \end{align*}
    as $n\to\infty$.
    Recall that all of these variables depend on $n$ except for $k$,
    which is fixed.
    By Lemma~\ref{lem:treenumbers}, we can rewrite the above expression as
    \begin{align*}
      \frac{m}{n+m}\sum_{r=0}^{k-1}\frac{(d_R-1)^{r}d_R^{-k}}{\ddLe{r-k}}N(k,r)
      +\frac{n}{n+m}\sum_{r=0}^{k-1}\frac{(d_L-1)^r}{\ddRe{r-k}d_R^{k}}
      N(k,r).
    \end{align*}
    Replacing $m/(n+m)$ and $n/(n+m)$ by $\alpha/(1+\alpha)$
    and $1/(1+\alpha)$ respectively, and taking the limit as $n\to\infty$
    yields
    \begin{align*}
      \frac{y}{1+y}\sum_{r=0}^{k-1}y^{r-k}N(k,r)
      +\frac{1}{1+y}\sum_{r=0}^{k-1}y^{-r}N(k,r).
    \end{align*}
    Note that this is where we used \eqref{eq:dc1}.
    We can simplify this expression to
    \begin{align*}
      \frac{2}{1+y}\sum_{r=0}^{k-1}y^{-r}N(k,r).
    \end{align*}
    This is exactly $\frac{2}{1+y}
      \beta_k(y^{-1},1)$ as given in \eqref{eq:MPmoment}.
  \end{proof}
  \begin{proof}[Proof of Theorem~\ref{thm:sq}]
    Let $\nu_n$ be the ESD of $d_R^{-1}X^TX$.
    As described in the introduction, the eigenvalues
    of $d_R^{-1/2}A$ consist of $\pm\sigma_i$ for the singular
    values $\sigma_1,\ldots,\sigma_n$ of $d_R^{-1/2}X$, along with $m-n$ 0's.
    It follows that
    \begin{align*}
      \int x^k\,d\nu_n &= \frac{m+n}{2n}\int x^{2k}\,d \mu_n\\
      &= \frac{\alpha+1}{2}\int x^{2k}\,d \mu_n.
    \end{align*}
    It follows from Proposition~\ref{thm:lln} and the convergence of
    $\alpha$ to $y$ that
    $\int x^k\,d\nu_n\to \beta_k(y^{-1},1)$ a.s.~as $n\to\infty$.
    Since the moments of $\nu_n$ converge almost surely
    to the moments of the Mar\v{c}enko-Pastur distribution,
    which is supported on a compact interval, $\nu_n$ converges
    almost surely to this distribution.
  \end{proof}
  
  To wrap things up, we compute the density of the limiting distribution
  of $\mu_n$.
  \begin{proof}[Proof of Corollary~\ref{cor:mudist}]
    We only need to show that the moments of the measure given
    by the density $(2/(1+y))p(x^2)|x|$ agree with the limits of the moments
    of $\mu_n$ found in Proposition~\ref{thm:lln}.
    The odd moments of this measure are 0 by symmetry,
    and the even moments are easily computed by integrating and
    substituting
    $u=x^2$.
  \end{proof}
  See Figure~\ref{fig:density} for a picture of the limiting distribution
  for a few values of $\alpha$. 
  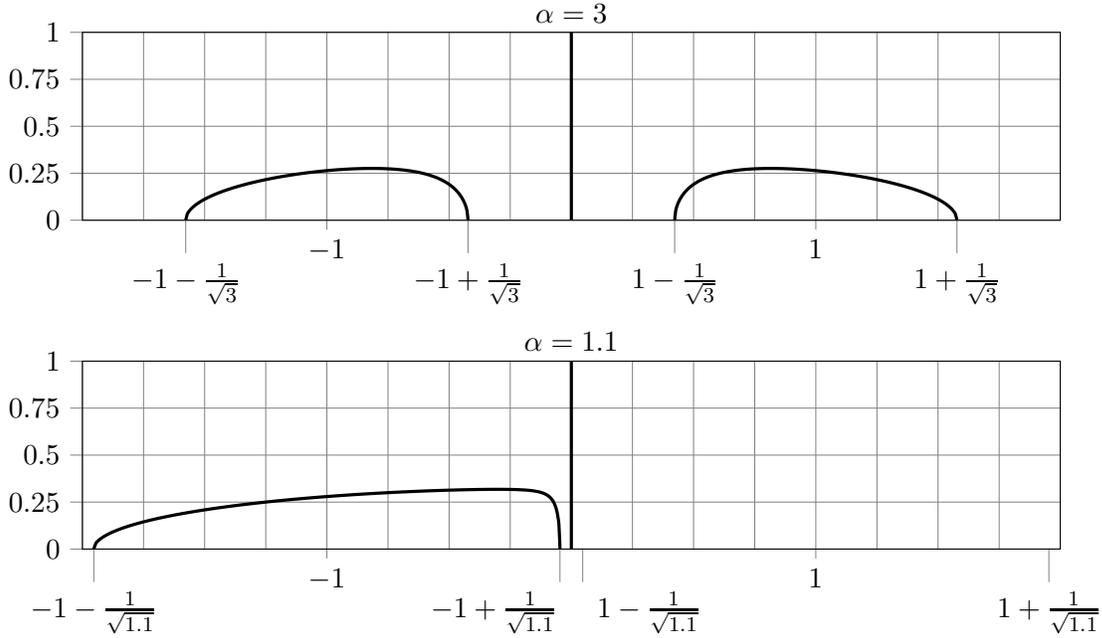
\begin{figure}
    \begin{center}
      \begin{tikzpicture}[yscale=2.5,xscale=3.25,help line/.style={very thin,gray}]
        \begin{scope}
        \foreach \x in {0,0.25,...,1.1}
          \draw[help line] (2,\x)--(-2.05,\x) node[left,black] {$\x$};
        \foreach \x in {-2,-1.75,...,2.1}
          \draw[help line] (\x,1)--(\x,0);
        \foreach \x in {-1,1}
          \draw[help line] (\x,0)--(\x,-0.05) node[below,black] {$\x$};
        \draw[help line] (-1.57735027,0)--(-1.57735027,-0.175) 
          node[below,black] {$-1-\frac{1}{\sqrt{3}}$};
        \draw[help line] (1.57735027,0)--(1.57735027,-0.175) 
          node[below,black] {$1+\frac{1}{\sqrt{3}}$};
        \draw[help line] (-0.422649731,0)--(-0.422649731,-0.175) 
          node[below,black] {$-1+\frac{1}{\sqrt{3}}$};
        \draw[help line] (0.422649731,0)--(0.422649731,-0.175) 
          node[below,black] {$1-\frac{1}{\sqrt{3}}$};
        \draw[very thick,smooth] plot file {dist-alpha-3-1.table};
        \draw[very thick,smooth] plot file {dist-alpha-3-2.table};
        \draw[very thick] (0,0)--(0,1.0);
        \draw (-2,0) rectangle (2,1);
        \draw (0,1)  node[above] {$\alpha=3$};
        \end{scope}
        \begin{scope}[yshift=-1.75cm]
        \foreach \x in {0,0.25,...,1.1}
          \draw[help line] (2,\x)--(-2.05,\x) node[left,black] {$\x$};
        \foreach \x in {-2,-1.75,...,2.1}
          \draw[help line] (\x,1)--(\x,0);
        \foreach \x in {-1,1}
          \draw[help line] (\x,0)--(\x,-0.05) node[below,black] {$\x$};
        \draw[help line] (-1.95346259,0)--(-1.95346259,-0.175) 
          node[below,black] {$-1-\frac{1}{\sqrt{1.1}}$};
        \draw[help line] (1.95346259,0)--(1.95346259,-0.175) 
          node[below,black] {$1+\frac{1}{\sqrt{1.1}}$};
        \draw[help line] (-0.0465374108,0)--(-0.0465374108,-0.175) 
          node[below,black,xshift=-25pt] 
            {$-1+\frac{1}{\sqrt{1.1}}$};
        \draw[help line] (0.0465374108,0)--(0.0465374108,-0.175) 
          node[below,black,xshift=25pt] {$1-\frac{1}{\sqrt{1.1}}$};
        \draw[very thick,smooth] plot file {dist-alpha-1.1-1.table};
        \draw[very thick,smooth] plot file {dist-alpha-1.1-2.table};
        \draw[very thick] (0,0)--(0,1.0);
        \draw (-2,0) rectangle (2,1);
        \draw (0,1)  node[above] {$\alpha=1.1$};
        \end{scope}
      \end{tikzpicture}
    \end{center}
    \caption{This figure depicts 
    the limiting density of the ESD of a random
    $(d_L,d_R)$-biregular bipartite graph for two different
    values of
    $\alpha=\frac{d_R}{d_L}$.  The spike at 0 denotes a point mass.
    The continuous part of the density
    is given by \eqref{eq:density}, and the point mass has size 
    $\frac{\alpha-1}{\alpha+1}$.
    Each of the left and right spikes are scaled down copies of the 
    Mar\^cenko-Pastur distribution under the transformation $x\mapsto \sqrt{x}$.
    When $\alpha=1$, the density
    reduces to that of the semicircle law.\label{fig:density}
    }
  \end{figure}
  \section{Convergence on short scales}\label{sec:locallaw}
  To show convergence of the ESD on short scales,
  we will use the method of \cite[Section~3]{DuP}.  The basic idea is
  to use the local approximation of our graph as a tree to estimate
  the graph's Stieltjes transform.
  
  First, we will define some terms and sketch the proof.
  The \emph{Stieltjes transform} of a probability measure $\mu$
  is the function $s(z) = \int (z-x)^{-1}\,d\mu(x)$ defined on the
  complex upper half-plane. 
  The Stieltjes transform of the ESD of an $n\times n$ Hermitian matrix $A$ is then
  $s(z)=\frac{1}{n}\tr R(z)$, where $R(z) = (A-zI)^{-1}$ is the \emph{resolvent}
  of $A$. If one can show that the Stieltjes tranform of the ESD converges, standard
  arguments from random matrix theory allow one to show that the ESD itself converges,
  with quantitative estimates on the Stieltjes transform translating into quantitative 
  estimates on the ESD.
  
  To simplify language, we will refer to the resolvent of a graph instead
  of the resolvent of the adjacency matrix of the graph. Similarly, we use
  the Stieltjes transform of a graph to mean the Stieltjes transform
  of the ESD of the adjacency matrix of the graph.
  Our goal is to show that the Stieltjes transform of
  a random biregular bipartite graph is close to its limit.
  We break the proof into the following steps:
  \begin{enumerate}
    \item (Section~\ref{subsec:trees}) Compute the resolvent matrix
      of a biregular
      tree of a given depth~$\zeta$.
    \item (Section~\ref{subsec:treestographs}) Let $v$ be a vertex of a deterministic biregular graph $G$
      with no cycles in its $(\zeta+1)$-neighborhood.  Show that the $(v,v)$ entry
      of the resolvent of $G$ is close to the
      $(\text{root}, \text{root})$ entry of the resolvent of a biregular tree of depth~$\zeta$.
    \item (Section~\ref{subsec:graphstorandgraphs})
      Show that nearly all the vertices of a random biregular graph
      have a large acyclic neighborhood, and use this fact to transfer
      the estimates of the earlier parts to random graphs, giving us
      an estimate of the Stieltjes transform.
  \end{enumerate}
  We finish by invoking a standard argument from 
 \cite[Lemma~64]{TaV} to deduce 
      Theorem~\ref{thm:locallaw} from the estimate on the Stieltjes transform.
      
  Our task is made slightly more difficult by the need to consider two
  different $(d_L,d_R)$-biregular trees: one in which the root has degree
  $d_L$, and one in which the root has degree $d_R$.
  \subsection{Preliminaries}

  We will use the following well-known formula for the inverse of
  a block matrix.
  \begin{prop}\label{eq:matrixinversion}
    Let $A$ and $D$ be $n\times n$ matrices of size $n\times n$
    and $m\times m$, respectively, and let $B$ be $n\times m$.
    Let
    \begin{align*}
      M = \begin{bmatrix} A&B\\B^T&D 
      \end{bmatrix}
    \end{align*}
    Then
    \begin{align*}
      M^{-1} &= \begin{bmatrix}
        A^{-1}+A^{-1}BF^{-1}B^TA^{-1} & -A^{-1}BF^{-1}\\
        -F^{-1}B^TA^{-1} & F^{-1}
      \end{bmatrix},&F = D-B^TA^{-1}B
    \end{align*}
  \end{prop}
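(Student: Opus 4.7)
The plan is to prove this by block LDU decomposition. Assume $A$ and $F = D - B^T A^{-1} B$ are invertible, which is implicit in the statement since otherwise the right-hand side is undefined. First I would exhibit the factorization
\[
M = \begin{pmatrix} I_n & 0 \\ B^T A^{-1} & I_m \end{pmatrix} \begin{pmatrix} A & 0 \\ 0 & F \end{pmatrix} \begin{pmatrix} I_n & A^{-1} B \\ 0 & I_m \end{pmatrix}
\]
and verify it by direct block multiplication. The upper-left block comes out to $A$, the upper-right to $B$, the lower-left to $B^T$, and the lower-right to $B^T A^{-1} B + F = D$ by the very definition of the Schur complement $F$.

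Each of the three factors then inverts in a trivial way. The unit block-triangular factors invert by negating the single off-diagonal block, since those off-diagonal blocks $N$ satisfy $N^2 = 0$ and hence $(I+N)(I-N) = I$. The block-diagonal middle factor inverts blockwise to $\mathrm{diag}(A^{-1}, F^{-1})$. Taking the product of the three inverses in the reversed order, I obtain
\[
M^{-1} = \begin{pmatrix} I_n & -A^{-1}B \\ 0 & I_m \end{pmatrix} \begin{pmatrix} A^{-1} & 0 \\ 0 & F^{-1} \end{pmatrix} \begin{pmatrix} I_n & 0 \\ -B^T A^{-1} & I_m \end{pmatrix},
\]
and I would expand this triple product to recover the claimed block form for $M^{-1}$: the lower-right block is $F^{-1}$; the off-diagonal blocks are $-A^{-1}BF^{-1}$ and $-F^{-1}B^TA^{-1}$; and the upper-left block picks up a correction $A^{-1}B F^{-1} B^T A^{-1}$ added to $A^{-1}$.

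There is no genuine obstacle here; every step is routine block-matrix algebra, and the ``hard part'' is just the notational bookkeeping when expanding the triple product. A slicker alternative would be to simply postulate the right-hand side as a candidate and verify $M \cdot M^{-1} = I_{n+m}$ block by block, which avoids producing the LDU factorization but obscures where the formula comes from. I prefer the LDU route because it explains the appearance of the Schur complement $F$ and makes the invertibility hypotheses on $A$ and $F$ transparent.
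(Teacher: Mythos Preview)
Your proof is correct. The paper does not actually prove this proposition at all; it simply introduces it as ``the following well-known formula for the inverse of a block matrix'' and states it without proof. Your LDU/Schur-complement argument is the standard derivation and is entirely sound, so you have supplied strictly more than the paper does here.
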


  In this section of the paper, we will define the ratio $\alpha$
  by $\alpha = (d_R-1)/(d_L-1)$ rather than $d_R/d_L$.
  Let $U_n(z)$ be the Chebyshev polynomial of the second kind
  of degree $n$.
  We define the following shifted Chebyshev polynomial,
  \begin{align*}
    q_n(z) = \alpha^{-n/2}U_n\left(\frac{\sqrt{\alpha}(z^2-\alpha^{-1}-1)}{2}
      \right).
  \end{align*}
  This family of polynomials satisfies the recurrence
  \begin{align}
    q_{-1}(z) &= 0\nonumber\\
    q_0(z) &= 1\nonumber\\
    q_n(z) &= (z^2-\alpha^{-1}-1)q_{n-1}(z)-\alpha^{-1}q_{n-2}(z),\quad n\geq 1,
      \label{eq:qrec}
  \end{align}
  which follows by applying the recurrence
  $U_{n}(z) = 2zU_{n-1}(z)-U_{n-2}(z)$.
  
  \subsection{Resolvents of trees}\label{subsec:trees}
  Our aim is to calculate the resolvents of biregular trees.
  We start, however, by considering trees in which each vertex has
  either $d_L-1$ or $d_R-1$ children; this means that every vertex
  has degree $d_L$ or $d_R$ except for the root, which has degree $d_L-1$
  or $d_R-1$.
  
  We define $T_L(\zeta)$ to be the tree with depth $\zeta$ where the root
  has $d_L-1$ children, its children each have $d_R-1$ children, their
  children each have $d_L-1$ children, and so on.  The tree $T_L(0)$ is
  a single vertex.  We define $T_R(\zeta)$ similarly, but with the root
  having $d_R-1$ children.  To determine the adjacency matrices of
  these trees, we place an ordering on the vertices as follows: 
  If $\zeta=0$, then there is only one vertex and hence one possible
  labeling.  For $\zeta>0$, we will define an ordering inductively.  Choose
  a subtree of the root and list of all its vertices in the order
  already determined for $\zeta-1$.  
  Then, do this with the remaining subtrees of the root.
  Finally, put the root last.
  
  Let $H_L$ and $H_R$ be the adjacency matrices of $T_L(\zeta)$
  and $T_R(\zeta)$, respectively.  We define
  \begin{align*}
    \varphi_L(\zeta) &= ((d_R-1)^{-1/2}H_L-z)^{-1}_{\text{root,root}}\\
    \psi_L(\zeta) &= ((d_R-1)^{-1/2}H_L-z)^{-1}_{\text{root,leaf}}
  \end{align*}
  We note that $\psi_L(\zeta)$ is independent of the particular leaf chosen.
  We will make use of the recursive structure of the trees to
  calculate these values.
  \begin{lemma}\label{lem:almostreg}
    \begin{enumerate}[(a)]
    \item
    \begin{align*}
      \varphi_L(2\zeta) &= -\frac{q_\zeta(z) + \alpha^{-1}q_{\zeta-1}(z)}
        {zq_\zeta(z)},
      & \varphi_L(2\zeta+1) &= -\frac{zq_\zeta(z)}{q_{\zeta+1}(z)+
        q_\zeta(z)},\\
      \varphi_R(2\zeta) &= -\frac{q_\zeta(z) + q_{\zeta-1}(z)}
        {zq_\zeta(z)},
      & \varphi_R(2\zeta+1) &= -\frac{zq_\zeta(z)}{q_{\zeta+1}(z)+
        \alpha^{-1}q_\zeta(z)},\\
    \end{align*}
    \item
      \begin{align*}
        \psi_L(2\zeta) &=\psi_R(2\zeta) = 
            -\frac{(d_R-1)^{-\zeta}}{zq_\zeta(z)}\\
        \psi_L(2\zeta+1) &= -\frac{(d_R-1)^{-\zeta-1/2}}
           {q_{\zeta+1}(z)+q_\zeta(z)},&
         \psi_R(2\zeta+1) &= -\frac{(d_R-1)^{-\zeta-1/2}}
           {q_{\zeta+1}(z)+\alpha^{-1}q_\zeta(z)}.
      \end{align*}
    \end{enumerate}
  \end{lemma}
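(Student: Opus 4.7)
My plan is to exploit the recursive tree structure via the Schur-complement formula of Proposition~\ref{eq:matrixinversion} to obtain recursions for $\varphi_L,\varphi_R,\psi_L,\psi_R$, and then verify the four explicit formulas by induction on $\zeta$ using the shifted Chebyshev recurrence \eqref{eq:qrec}.

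First I would set up the recursions for the diagonal entries. Removing the root of $T_L(\zeta)$ disconnects the tree into $d_L-1$ disjoint copies of $T_R(\zeta-1)$, since each child of the root is a right-type vertex with $d_R-1$ children remaining. Placing the root last in the ordering, $(d_R-1)^{-1/2}H_L-zI$ decomposes as $\bigl(\begin{smallmatrix} A & b \\ b^T & -z\end{smallmatrix}\bigr)$, where $A$ is block-diagonal with $d_L-1$ identical diagonal blocks associated to the $T_R(\zeta-1)$ subtrees, and $b$ has the value $(d_R-1)^{-1/2}$ at each subtree-root coordinate and $0$ elsewhere. Since $A^{-1}$ is block-diagonal and $b$ supports exactly one coordinate per block, whose corresponding diagonal entry of $A^{-1}$ equals $\varphi_R(\zeta-1)$ by definition, we get $b^TA^{-1}b=(d_L-1)(d_R-1)^{-1}\varphi_R(\zeta-1)=\alpha^{-1}\varphi_R(\zeta-1)$, and Schur complements then yield
\begin{align*}
  \varphi_L(\zeta) = \frac{-1}{z+\alpha^{-1}\varphi_R(\zeta-1)}.
\end{align*}
The symmetric computation for $T_R(\zeta)$ gives $\varphi_R(\zeta)=-1/(z+\varphi_L(\zeta-1))$, the coefficient being $1$ rather than $\alpha^{-1}$ because $(d_R-1)(d_R-1)^{-1}=1$. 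Reading off the off-diagonal block $-F^{-1}b^TA^{-1}$ of the same inverse at any leaf coordinate produces the companion pair
\begin{align*}
  \psi_L(\zeta) &= -(d_R-1)^{-1/2}\varphi_L(\zeta)\psi_R(\zeta-1), &
  \psi_R(\zeta) &= -(d_R-1)^{-1/2}\varphi_R(\zeta)\psi_L(\zeta-1),
\end{align*}
since any chosen leaf lies in a unique subtree whose root absorbs the only nonzero coordinate of $b$.

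Next I would verify the four claimed closed forms by induction on $\zeta$. The base case $\zeta=0$ is a single vertex, so $\varphi_L(0)=\varphi_R(0)=\psi_L(0)=\psi_R(0)=-1/z$, which matches the stated formulas once we use $q_0(z)=1$ and $q_{-1}(z)=0$. For the inductive step, I would substitute the claimed closed forms into the two $\varphi$-recursions and clear denominators; in each parity case the resulting polynomial identity reduces, after a single line of algebra, to $q_{\zeta+1}=(z^2-\alpha^{-1}-1)q_\zeta-\alpha^{-1}q_{\zeta-1}$, which is exactly \eqref{eq:qrec}. For example, substituting $\varphi_R(2\zeta)=-(q_\zeta+q_{\zeta-1})/(zq_\zeta)$ into the recursion for $\varphi_L(2\zeta+1)$ and requiring the result to equal $-zq_\zeta/(q_{\zeta+1}+q_\zeta)$ is precisely this recurrence. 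The $\psi$-formulas then follow mechanically from the $\varphi$-formulas combined with the product recursion, with the $(d_R-1)^{-1/2}$ factors telescoping to the claimed powers.

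The only genuine obstacle is bookkeeping across the four parallel parity cases (even versus odd $\zeta$, and $L$ versus $R$ root-type), which are interleaved because incrementing $\zeta$ by one swaps $L$ with $R$ and toggles parity. The asymmetry between $\alpha^{-1}$ and $1$ in the two $\varphi$-recursions is precisely what produces the asymmetric placement of $\alpha^{-1}$ in the numerators of the stated formulas, so I would have to take care not to drop or misplace these factors. Beyond this accounting, the proof is a routine unwinding of the Chebyshev recurrence.
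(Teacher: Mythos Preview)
Your proposal is correct and shares the paper's key step: both derive the Schur-complement recursions $\varphi_L(\zeta)=-(z+\alpha^{-1}\varphi_R(\zeta-1))^{-1}$, $\varphi_R(\zeta)=-(z+\varphi_L(\zeta-1))^{-1}$, and $\psi_{L/R}(\zeta)=-(d_R-1)^{-1/2}\varphi_{L/R}(\zeta)\psi_{R/L}(\zeta-1)$ by stripping the root and invoking Proposition~\ref{eq:matrixinversion}.

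The difference lies in how the closed forms are established. The paper unwinds the $\varphi_L$-recursion into a finite continued fraction, applies the standard convergent recurrences $A_\zeta,B_\zeta$ from continued-fraction theory, collapses these to a second-order recurrence matching \eqref{eq:qrec}, and thereby \emph{derives} the formulas; the $\psi$-formulas are then obtained by telescoping the product $\prod_j\varphi_L(2j)\varphi_R(2j-1)$. You instead take the stated formulas as an induction hypothesis and verify that substituting them into the recursions reproduces \eqref{eq:qrec}. Your route is slightly more elementary---it avoids the continued-fraction machinery entirely---at the cost of giving no hint of where the formulas come from. Since the lemma statement already supplies the formulas, verification by induction is entirely adequate here.
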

  \begin{proof}
    We will start by showing that $\varphi_L(\zeta)$
    and $\varphi_R(\zeta)$ satisfy the
    recurrences
    \begin{align}
      \varphi_L(\zeta) &= 
        -\big(z+\alpha^{-1}\varphi_R(\zeta-1)\big)^{-1},\label{eq:rec1}\\
      \varphi_R(\zeta) &=
        -\big(z+\varphi_L(\zeta-1)\big)^{-1}\label{eq:rec2}.
    \end{align}
    Consider the tree $T_L$ of depth $\zeta$
    and let $H_1,\ldots,H_{d_L-1}$ denote the adjacency
    matrices of the subtrees of the root.
    Using the given ordering for the vertices, we have
    \begin{align*}
      \frac{1}{\sqrt{d_R-1}}H_L(\zeta) - z = 
        \begin{bmatrix}
          \frac{1}{\sqrt{d_R-1}}H_R(\zeta-1)-z &&&&\\
          & \frac{1}{\sqrt{d_R-1}}H_R(\zeta-1)-z&&&\\
          &&\ddots&&u \\
          &&&\frac{1}{\sqrt{d_R-1}}H_R(\zeta-1)-z\\
          &u^T&&&-z
        \end{bmatrix}
    \end{align*}
    where $u$ is a column vector representing the children of the root.
    This vector is $(d_R-1)^{-1/2}$ in the root of each of the subtrees of
    the root and 0 elsewhere.  Using Proposition~\ref{eq:matrixinversion}
    and thinking of the $-z$ in the bottom right corner as a $1\times 1$
    block, we find
    \begin{align*}
      \varphi_L(\zeta) = \left(-z-\frac{d_L-1}{d_R-1}
      \varphi_R(\zeta-1)\right)^{-1}
    \end{align*}
    which is \eqref{eq:rec1}.  The proof for \eqref{eq:rec2} is the same.
    
    Unwinding these recurrences and noting
    that $\varphi_L(0) = \varphi_R(0)=-z^{-1}$, 
    we have the following continued fraction
    representation of $\varphi_L(\zeta)$:
    \begin{align*}
      \varphi_L(\zeta) &= -\cfrac{1}{z - \cfrac{\alpha^{-1}}
        {z - \cfrac{1}{z-\cfrac{\cdots}{{z-z^{-1}}}}}}.
    \end{align*}
    Using standard formulas for the evaluation of continued fractions (see
    \cite{LoWaa}),
    we find that $\varphi_L(\zeta)=\frac{A_\zeta}{B_\zeta}$, where
    \begin{align*}
      A_{2\zeta} &= zA_{2\zeta-1}-A_{2\zeta-2}, &
      A_{2\zeta+1} &= zA_{2\zeta}-\alpha^{-1}A_{2\zeta-1}\\
      B_{2\zeta} &= zB_{2\zeta-1}-B_{2\zeta-2}, &
      B_{2\zeta+1} &= zB_{2\zeta}-\alpha^{-1}B_{2\zeta-1}
    \end{align*}
    with the initial conditions
    \begin{align*}
      A_0 &= -1, & A_1 &= -z,\\
      B_0 &= z, & B_1 &= z^2-\alpha^{-1}.
    \end{align*}
    We can iterate these recurrences as follows:
    \begin{align*}
      A_{2\zeta} &= z(zA_{2\zeta-2}-\alpha^{-1}A_{2\zeta-3}) - A_{2\zeta-2}\\
          &= z\big(zA_{2\zeta-2}-\frac{\alpha^{-1}}{z}(A_{2\zeta-2}+
          A_{2\zeta-4})\big)
           - A_{2\zeta-2}\\
         &= (z^2-\alpha^{-1}-1)A_{2\zeta-2}-\alpha^{-1}A_{2\zeta-4}.
    \end{align*}
    Applying this procedure to the $A_{2\zeta+1}$ and to the
    $B_{2\zeta}$ and $B_{2\zeta+1}$ cases give the same result, yielding
    \begin{align*}
      A_{\zeta} &= (z^2-\alpha^{-1}-1)A_{\zeta-1}-\alpha^{-1}A_{\zeta-2},\\
      B_{\zeta} &= (z^2-\alpha^{-1}-1)B_{\zeta-1}-\alpha^{-1}B_{\zeta-2}.
    \end{align*}
    It is easily checked using \eqref{eq:qrec} that
    \begin{align*}
      A_{2\zeta} &= -\big(q_{\zeta}(z)+\alpha^{-1}q_{\zeta-1}(z)\big), &
      A_{2\zeta+1} &= -zq_{\zeta}(z),\\
      B_{2\zeta} &= zq_{\zeta}(z), &
      B_{2\zeta+1} &= q_{\zeta+1}(z)+q_\zeta(z).
    \end{align*}
    From these expressions and \eqref{eq:rec2}, it is straightforward to derive
    the expressions for $\varphi_R(2\zeta)$ and $\varphi_R(2\zeta+1)$.      
    
    To compute $\psi_L(2\zeta)$ and $\psi_R(2\zeta)$, we will first show
    \begin{align}
      \psi_L(\zeta) &= -(d_R-1)^{-1/2}\varphi_L(\zeta)\psi_R(\zeta-1),
      \label{eq:psilrec}\\
      \psi_R(\zeta) &= -(d_R-1)^{-1/2}\varphi_R(\zeta)\psi_L(\zeta-1)
      \label{eq:psirrec}.
    \end{align}
    Using Proposition~\ref{eq:matrixinversion}, if $A$ is the minor
    of $(d_R-1)^{-1/2}H_L-z$ consisting of all but the last row
    and column,
    we have
    \begin{align*}
      \psi_L(\zeta)&= \left(\frac{1}{\sqrt{d_R-1}}H_L-z\right)^{-1}_{1,
      \text{root}}\\
        &=-\varphi_L(\zeta)(A^{-1}u)_1\\
        &=-(d_R-1)^{-1/2}\varphi_L(\zeta)\psi_R(\zeta-1),
    \end{align*}
    proving \eqref{eq:psilrec}.
    Equation \eqref{eq:psirrec} is derived in the same way.  By combining
    these,
    we get $\psi_L(\zeta)=(d_R-1)^{-1}\varphi_L(\zeta)\varphi_R(\zeta-1)
    \psi_L(\zeta-2)$, whence
    \begin{align*}
      \psi_L(2\zeta) &= \psi_L(0)(d_R-1)^{-\zeta}\prod_{j=1}^{\zeta}
        \varphi_L(2j)\varphi_R(2j-1)\\
       &=-z^{-1}(d_R-1)^{-\zeta}\prod_{j=1}^\zeta \frac{q_{j-1}(z)}{q_j(z)}\\
       &=-\frac{(d_R-1)^{-\zeta}}{zq_\zeta(z)}.
    \end{align*}
    The proof of the expression for $\psi_R(2\zeta)$ is identical, and
    the expressions for $\psi_L(2\zeta+1)$ and $\psi_R(2\zeta+1)$
    follow immediately by applying \eqref{eq:psilrec} and
    \eqref{eq:psirrec}.
  \end{proof}

  We now turn from these almost regular trees to the real thing.
  Let $\widetilde{T}_L(\zeta)$ be the $(d_L,d_R)$-biregular
  tree of depth $\zeta$ whose root has degree $d_L$,
  and let $\widetilde{T}_R(\zeta)$ be the $(d_L,d_R)$-biregular
  tree of depth $\zeta$ whose root has degree $d_R$.  Let $\widetilde{H}_L$ and
  $\widetilde{H}_R$ be the adjacency matrices of $\widetilde{T}_L(\zeta)$
  and $\widetilde{T}_R(\zeta)$, respectively, with vertices ordered
  as with $H_L$ and $H_R$.
  We define
  \begin{align*}
    \widetilde{\varphi}_L(\zeta) &= 
      ((d_R-1)^{-1/2}\widetilde{H}_L-z)^{-1}_{\text{root,root}},&
      \widetilde{\varphi}_R(\zeta) &= 
      ((d_R-1)^{-1/2}\widetilde{H}_R-z)^{-1}_{\text{root,root}},\\
    \widetilde{\psi}_L(\zeta) &= ((d_R-1)^{-1/2}
    \widetilde{H}_L-z)^{-1}_{\text{root,leaf}},&
    \widetilde{\psi}_R(\zeta) &= ((d_R-1)^{-1/2}
    \widetilde{H}_R-z)^{-1}_{\text{root,leaf}}.
  \end{align*}
  \begin{lemma}\label{lem:regtree}
    \begin{align*}
      \widetilde{\varphi}_L(2\zeta) 
      &= -\frac{q_\zeta(z) + \alpha^{-1}q_{\zeta-1}(z)}
        {z(q_\zeta(z)-\frac1{d_R-1}q_{\zeta-1}(z))},&
      \widetilde{\varphi}_R(2\zeta) 
      &= -\frac{q_\zeta(z) + q_{\zeta-1}(z)}
        {z(q_\zeta(z)-\frac1{d_R-1}q_{\zeta-1}(z))}\\
      \widetilde{\psi}_L(2\zeta) &=\widetilde{\psi}_R(2\zeta) =
        -\frac{(d_R-1)^{-\zeta}}{z(q_\zeta(z)-
      \frac{1}{d_R-1}q_{\zeta-1}(z))}.
    \end{align*}
  \end{lemma}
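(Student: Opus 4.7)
The plan is to derive the biregular formulas from Lemma~\ref{lem:almostreg} by a single application of the block inverse at the root. The key observation is that $\widetilde{T}_L(\zeta)$ decomposes as a root vertex together with $d_L$ subtrees, each rooted at a child of the root with $d_R-1$ further children; each such subtree is precisely $T_R(\zeta-1)$ from the almost-regular family. Similarly, $\widetilde{T}_R(\zeta)$ decomposes as a root together with $d_R$ subtrees, each isomorphic to $T_L(\zeta-1)$. So to pass from the almost-regular to the regular case, we just need to replace ``$d_L-1$ copies'' by ``$d_L$ copies'' (and symmetrically for $R$) at the top level.

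Ordering vertices with the root last, the matrix $(d_R-1)^{-1/2}\widetilde{H}_L - z$ takes the block form of Proposition~\ref{eq:matrixinversion} with a block-diagonal upper-left $A$ (comprising $d_L$ copies of $(d_R-1)^{-1/2}H_R(\zeta-1)-z$), a coupling vector $u$ that is $(d_R-1)^{-1/2}$ at each subtree's root and $0$ elsewhere, and a scalar lower-right $-z$. The Schur-complement identity $F^{-1}$ at the root gives the recurrence
\begin{align*}
\widetilde{\varphi}_L(\zeta) = -\bigl(z + \tfrac{d_L}{d_R-1}\varphi_R(\zeta-1)\bigr)^{-1}, \qquad
\widetilde{\varphi}_R(\zeta) = -\bigl(z + \tfrac{d_R}{d_R-1}\varphi_L(\zeta-1)\bigr)^{-1},
\end{align*}
the only difference from \eqref{eq:rec1}--\eqref{eq:rec2} being the coefficient $d_L/(d_R-1)$ in place of $\alpha^{-1}=(d_L-1)/(d_R-1)$ (and symmetrically).

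Next, I would substitute the closed forms from Lemma~\ref{lem:almostreg} into these recurrences, taking $\zeta\mapsto 2\zeta$. Using $\varphi_R(2\zeta-1) = -zq_{\zeta-1}(z)/(q_\zeta(z)+\alpha^{-1}q_{\zeta-1}(z))$ and the arithmetic identity $\alpha^{-1}-d_L/(d_R-1)=-1/(d_R-1)$, a direct simplification yields the stated formula for $\widetilde{\varphi}_L(2\zeta)$; the computation for $\widetilde{\varphi}_R(2\zeta)$ is completely analogous, using $\varphi_L(2\zeta-1) = -zq_{\zeta-1}(z)/(q_\zeta(z)+q_{\zeta-1}(z))$ and $1-d_R/(d_R-1)=-1/(d_R-1)$. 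These two algebraic cancellations are the only non-routine ingredient.

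For $\widetilde{\psi}_L(2\zeta)$ (and its $R$ counterpart), I would reuse the same block decomposition and the upper-right corner of the inverse from Proposition~\ref{eq:matrixinversion}, namely $-F^{-1}B^TA^{-1}$, which gives
\begin{align*}
\widetilde{\psi}_L(\zeta) = -(d_R-1)^{-1/2}\widetilde{\varphi}_L(\zeta)\,\psi_R(\zeta-1),
\end{align*}
exactly parallel to \eqref{eq:psilrec} but with $\widetilde{\varphi}_L$ in place of $\varphi_L$ (since $u$ and the subtree structure below the root are unchanged; only the number of subtrees and the root's self-loop block change). Plugging in the now-known $\widetilde{\varphi}_L(2\zeta)$ and the formula $\psi_R(2\zeta-1) = -(d_R-1)^{-\zeta+1/2}/(q_\zeta(z)+\alpha^{-1}q_{\zeta-1}(z))$ from Lemma~\ref{lem:almostreg}, the factor $q_\zeta+\alpha^{-1}q_{\zeta-1}$ cancels, leaving the claimed expression; the computation for $\widetilde{\psi}_R(2\zeta)$ proceeds identically and produces the same answer because the surviving denominator depends only on the subtree structure, which is $T_R(2\zeta-1)$ in one case and $T_L(2\zeta-1)$ in the other but yields the same $\psi$-value at even depth by Lemma~\ref{lem:almostreg}. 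The only real obstacle is keeping the three layers of bookkeeping straight: two kinds of trees (regular versus almost-regular), two possible root degrees, and the shifting between $\zeta$ and $2\zeta$ in the statements; once set up correctly, every step reduces to Schur-complement algebra plus the two linear identities noted above.
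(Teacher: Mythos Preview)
Your proposal is correct and follows essentially the same route as the paper: derive the one-step recurrences $\widetilde{\varphi}_L(\zeta)=-\bigl(z+\tfrac{d_L}{d_R-1}\varphi_R(\zeta-1)\bigr)^{-1}$ and $\widetilde{\psi}_L(\zeta)=-(d_R-1)^{-1/2}\widetilde{\varphi}_L(\zeta)\psi_R(\zeta-1)$ via the Schur complement at the root, then substitute the closed forms from Lemma~\ref{lem:almostreg}. Your explicit identification of the identities $\alpha^{-1}-d_L/(d_R-1)=-1/(d_R-1)$ and $1-d_R/(d_R-1)=-1/(d_R-1)$ is a nice clarification of the algebra the paper leaves implicit; one small slip is your closing remark that $\psi_L$ and $\psi_R$ agree ``at even depth''---here you are using $\psi_L(2\zeta-1)$ and $\psi_R(2\zeta-1)$, which differ, and the equality $\widetilde{\psi}_L(2\zeta)=\widetilde{\psi}_R(2\zeta)$ actually comes from the cancellation of $q_\zeta+\alpha^{-1}q_{\zeta-1}$ (resp.\ $q_\zeta+q_{\zeta-1}$) between the $\widetilde{\varphi}$ numerator and the $\psi$ denominator in each case.
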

  \begin{proof}
    Because the root of $\widetilde{H_L}$ has $d_L$ children
    instead of $d_L-1$, the methods of Lemma~\ref{lem:almostreg}
    give
    \begin{align*}
      \widetilde{\varphi}_L(\zeta) &= 
        \left(-z-\frac{d_L}{d_R-1}
        \varphi_R(\zeta-1)\right)^{-1}.
    \end{align*}
    Substituting in the value for $\varphi_R(\zeta-1)$
    from Lemma~\ref{lem:almostreg} yields the desired expression.
    The expression for $\widetilde{\varphi}_R(2\zeta)$ is derived in the
    same way.
    
    The same procedure shows that
    \begin{align*}
      \widetilde{\psi}_L(2\zeta)=-(d_R-1)^{-1/2}\widetilde{\varphi}_L(2\zeta)
      \psi_R(2\zeta-1),
    \end{align*}
    and substituting the value from Lemma~\ref{lem:almostreg} yields 
    the desired expression.
  \end{proof}

  We will now bound the rate of convergence of some of
  these functions to their limits
  as $\zeta\to\infty$.  First, 
  define the complex function $F(z) = z+\sqrt{z^2-1}$, with
  branch cut $[0,\infty)$ for the square root. Let
  $w(z) = F\bigl(\frac{1}{2}\sqrt{\alpha}(z^2-\alpha^{-1}-1)\bigr)$ and $r(z)=|w(z)|$.
  We will refer to $w(z)$ and $r(z)$ as simply $w$ and $r$.
  Note that $r>1$ for all $\Im(z)>0$.  Using a well-known expression for 
  the Chebyshev function $U_n(z)$ (see \cite{MH}), we can
  expand $q_{\zeta}(z)$ as
  \begin{align}
    q_{\zeta}(z) &= 
       \alpha^{-\zeta/2}\frac{w^{\zeta+1}-w^{-\zeta-1}}{w-w^{-1}}.\label{eq:chebw}
  \end{align}
  
  As we will see, the limits of $\varphi_L(\zeta)$
  and $\varphi_R(\zeta)$ as $\zeta\to\infty$
  are given by the following functions,
  defined on the upper half-plane:
  \begin{align*}
    s_L(z) &= -\frac1z - \frac{\alpha^{-1/2}w^{-1}}{z},&
    s_R(z) &=-\frac1z - \frac{\alpha^{1/2}w^{-1}}{z},
  \end{align*}
  with branch cut $[0,\infty)$ for the square root.  We define
  \begin{align*}
    s(z) &= \frac{\alpha s_L(z)+s_R(z)}{1+\alpha}
      = \frac{\alpha}{1+\alpha}\left(
      -z + \sqrt{\left(z-\frac{1-\alpha}{\alpha z}\right)^2-4},
      \right)
  \end{align*}
  again with branch cut $[0,\infty)$ for the square root,
  and we note that $s(z)$ is the Stieltjes transform of the limiting ESD
  $\mu$ of Corollary~\ref{cor:mudist}.
  
  We bound the convergence in terms of $r=r(z)$ and $\zeta$:
  \begin{lemma}\label{lem:bounds}
  \begin{align*}
    |\varphi_L(2\zeta) - s_L(z)|
      &\leq 
      \frac{2\alpha^{-1/2}r^{-2\zeta}}
      {|z|(1-r^{-2\zeta-2})}\\
    |\varphi_R(2\zeta) - s_R(z)|
      &\leq 
      \frac{2\alpha^{1/2}r^{-2\zeta}}
      {|z|(1-r^{-2\zeta-2})}\\
    |\psi_L(2\zeta)|,\ |\psi_R(2\zeta)| &\leq
    \frac{2\alpha^{\zeta/2}(d_R-1)^{-\zeta}r^{-\zeta}
      }{|z|(1-r^{-2\zeta-1})}
  \end{align*}
  \end{lemma}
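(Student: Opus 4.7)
The plan is to substitute the closed-form expression \eqref{eq:chebw} for $q_\zeta(z)$ directly into the formulas from Lemma~\ref{lem:almostreg} and then manipulate the resulting rational expressions in $w$, exploiting the fact that $|w|=r>1$ on the upper half-plane to control the tails.

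First I would rewrite $\varphi_L(2\zeta)=-\frac{1}{z}\bigl(1+\alpha^{-1}\frac{q_{\zeta-1}(z)}{q_\zeta(z)}\bigr)$ using \eqref{eq:chebw}, giving
\begin{align*}
  \varphi_L(2\zeta) = -\frac{1}{z}\left(1+\alpha^{-1/2}\,\frac{w^{\zeta}-w^{-\zeta}}{w^{\zeta+1}-w^{-\zeta-1}}\right),
\end{align*}
and similarly for $\varphi_R(2\zeta)$. Since $r>1$, the ratio $(w^{\zeta}-w^{-\zeta})/(w^{\zeta+1}-w^{-\zeta-1})$ tends to $w^{-1}$ as $\zeta\to\infty$, so the limits are exactly $s_L(z)$ and $s_R(z)$ as defined before the lemma statement. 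This both identifies the limits and sets up the error analysis.

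Next, to bound $|\varphi_L(2\zeta)-s_L(z)|$, I would subtract and combine over a common denominator:
\begin{align*}
  \frac{w^{\zeta}-w^{-\zeta}}{w^{\zeta+1}-w^{-\zeta-1}} - w^{-1}
    = \frac{w^{-\zeta-1}(1-w^{2})}{w^{\zeta+2}-w^{-\zeta}}
    = \frac{1-w^{2}}{w^{2\zeta+3}-w}.
\end{align*}
Taking absolute values and using $|w^{2\zeta+2}-1|\geq r^{2\zeta+2}-1 = r^{2\zeta+2}(1-r^{-2\zeta-2})$ together with the elementary estimate $|1-w^{2}|\leq 1+r^{2}\leq 2r^{3}$ (which holds since $r\geq 1$) gives the desired bound
\begin{align*}
  |\varphi_L(2\zeta)-s_L(z)| \leq \frac{\alpha^{-1/2}}{|z|}\cdot\frac{2r^{-2\zeta}}{1-r^{-2\zeta-2}}.
\end{align*}
The bound on $|\varphi_R(2\zeta)-s_R(z)|$ is entirely analogous, differing only in the prefactor $\alpha^{1/2}$ in place of $\alpha^{-1/2}$.

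For the $\psi$ bounds, the formula $\psi_L(2\zeta)=\psi_R(2\zeta)=-(d_R-1)^{-\zeta}/(zq_\zeta(z))$ together with \eqref{eq:chebw} yields
\begin{align*}
  |\psi_L(2\zeta)| = \frac{(d_R-1)^{-\zeta}\alpha^{\zeta/2}|w-w^{-1}|}{|z|\,|w^{\zeta+1}-w^{-\zeta-1}|},
\end{align*}
and the bound follows by $|w-w^{-1}|\leq 2r$ in the numerator and the reverse triangle inequality $|w^{\zeta+1}-w^{-\zeta-1}|\geq r^{\zeta+1}(1-r^{-2\zeta-2})$ in the denominator. I do not expect any of these steps to be genuinely difficult; the only mildly delicate point is verifying that the crude majorizations (such as $1+r^{2}\leq 2r^{3}$) are strong enough to produce the clean form stated in the lemma, and that the denominators $1-r^{-2\zeta-c}$ remain bounded away from zero—which is automatic once we fix $\Im(z)>0$ so that $r>1$ strictly.
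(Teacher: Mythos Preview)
Your proposal is correct and follows essentially the same approach as the paper: substitute the closed form \eqref{eq:chebw} into the expressions from Lemma~\ref{lem:almostreg}, simplify the resulting rational function of $w$, and bound using $|w|=r>1$ with the reverse triangle inequality. The only cosmetic difference is that the paper factors the error for $\varphi_L$ as $\frac{w^{-2\zeta-1}(1-w^{-2})}{1-w^{-2\zeta-2}}$ and uses $|1-w^{-2}|\leq 2$ (yielding the marginally sharper $r^{-2\zeta-1}$), whereas you keep the form $\frac{1-w^{2}}{w^{2\zeta+3}-w}$ and use $1+r^{2}\leq 2r^{3}$, landing exactly on the stated $r^{-2\zeta}$; both routes are valid and equivalent in substance.
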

  
  \begin{proof}
    Applying \eqref{eq:chebw} to the formula for $\varphi_L(2\zeta)$,
    \begin{align*}
      |\varphi_L(2\zeta) - s_L(z)| &= \frac{\alpha^{-1/2}}{|z|}
        \left|w^{-1} - \frac{w^{\zeta}-w^{-\zeta}}{w^{\zeta+1}-w^{-\zeta-1}}\right|\\
        &=\frac{\alpha^{-1/2}}{|z|}\left|\frac{w^{-2\zeta-1}(1-w^{-2})}{1 - 
          w^{-2\zeta-2}}\right|\\
        &\leq \frac{2\alpha^{-1/2}r^{-2\zeta-1}}{|z|(1-r^{-2\zeta-2})}.
    \end{align*}
    The exact same procedure establishes the corresponding inequality
    for $|\varphi_R(2\zeta)-s_R(2\zeta)|$.  We can similarly compute
    \begin{align*}
      |\psi_L(2\zeta)|
      = |\psi_R(2\zeta)|
      &= \left| \frac{(d_R-1)^{-\zeta}(w-w^{-1})}{z(w^{\zeta+1}-w^{-\zeta-1})}
      \right|\\
      &\leq \frac{2\alpha^{\zeta/2}(d_R-1)^{-\zeta}r^{-\zeta}}
         {|z|(1-r^{-2\zeta-2})}.\tag*{\qedhere}
    \end{align*}
  \end{proof}
  \subsection{From trees to deterministic graphs}\label{subsec:treestographs}
  We now move from trees to graphs with large, acyclic neighborhoods.
  Let $G$ be a deterministic $(d_L,d_R)$-biregular 
  graph that has a root vertex with an acyclic $(\zeta+1)$-neighborhood.
  Let
  $A$ be the adjacency matrix of this graph.
  Our goal is to show that the resolvent of $A$ is well approximated by
  the resolvent of $\widetilde{H}_L$ or $\widetilde{H}_R$.
  If the root of $G$ has degree $d_L$, then we consider the error term
  \begin{align*}
    E_L(\zeta) &= \big((d_R-1)^{-1/2}A-z\big)^{-1}_{\text{root,root}}
      - \widetilde{\varphi}_L(\zeta),
  \end{align*}
  and if the root of $G$ has degree $d_R$, then we consider the error term
  \begin{align*}
    E_R(\zeta) &= \big((d_R-1)^{-1/2}A-z\big)^{-1}_{\text{root,root}}
      - \widetilde{\varphi}_R(\zeta).
  \end{align*}
  
  \begin{lemma}\label{lem:Ebound}
    We can bound $E_L$ and $E_R$ by
    \begin{align*}
      |E_L(\zeta)|
      &\leq 
       \frac{|\widetilde{\psi}_L(\zeta)|^2d_L(d_R-1)^{\ceil{\zeta/2}}
        (d_L-1)^{\floor{\zeta   /2}}}
      {(d_R-1)\Im(z)}\\
      |E_R(\zeta)|
      &\leq \frac{|\widetilde{\psi}_R(\zeta)|^2
      d_R(d_L-1)^{\ceil{\zeta/2}}
        (d_R-1)^{\floor{\zeta/2}}}
      {(d_R-1)\Im(z)}
    \end{align*}
  \end{lemma}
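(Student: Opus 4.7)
My plan is to apply a Schur complement argument that exploits the local tree structure around the root. Write $M_1 := (d_R-1)^{-1/2}A - zI$ and partition the vertex set of $G$ into $P = \{v : \mathrm{dist}(v,\mathrm{root})\leq\zeta\}$ and $Q = V(G)\setminus P$. The hypothesis that the $(\zeta+1)$-neighborhood is acyclic guarantees two things: the induced subgraph of $G$ on $P$ is exactly $\widetilde{T}_L(\zeta)$, and the only edges between $P$ and $Q$ run from leaves of this tree to their depth-$(\zeta+1)$ neighbors (each such neighbor has a \emph{unique} parent in $P$, again by acyclicity). In block form with respect to this partition,
\[
  M_1 = \begin{pmatrix} K & B \\ B^T & D \end{pmatrix},
\]
where $K = (d_R-1)^{-1/2}\widetilde{H}_L - zI$ and every nonzero entry of $B$ equals $(d_R-1)^{-1/2}$. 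Applying Proposition~\ref{eq:matrixinversion} and isolating the $(\mathrm{root},\mathrm{root})$ entry gives
\[
  E_L(\zeta) = (K^{-1} B F^{-1} B^T K^{-1})_{\mathrm{root},\mathrm{root}}, \qquad F := D - B^T K^{-1} B.
\]

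Next I compute the row vector $(K^{-1}B)_{\mathrm{root},\cdot}$ explicitly. By the unique-parent property, a vertex $w\in Q$ contributes only when $\mathrm{dist}(w,\mathrm{root}) = \zeta+1$, in which case $w$ has exactly one neighbor $u_w \in P$ (a leaf of the tree), and
\[
  (K^{-1} B)_{\mathrm{root},w} = (K^{-1})_{\mathrm{root},u_w}(d_R-1)^{-1/2} = (d_R-1)^{-1/2}\widetilde{\psi}_L(\zeta).
\]
Letting $\chi$ be the indicator vector of vertices at depth $\zeta+1$, the clean identity
\[
  E_L(\zeta) = (d_R-1)^{-1}\widetilde{\psi}_L(\zeta)^2 \, \chi^T F^{-1} \chi
\]
follows, so that $|E_L(\zeta)| \leq (d_R-1)^{-1}|\widetilde{\psi}_L(\zeta)|^2\, \|\chi\|^2 \, \|F^{-1}\|_{\mathrm{op}}$.

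It remains to estimate the two remaining factors. For the operator norm of $F^{-1}$ I use the key observation that, again by Proposition~\ref{eq:matrixinversion}, $F^{-1}$ is precisely the $QQ$-principal block of $M_1^{-1}$; since $M_1 = H - zI$ with $H = (d_R-1)^{-1/2}A$ Hermitian, this gives $\|F^{-1}\|_{\mathrm{op}} \leq \|M_1^{-1}\|_{\mathrm{op}} \leq 1/\Im(z)$. For $\|\chi\|^2$, a direct layer-by-layer count of the depth-$(\zeta+1)$ layer of $\widetilde{T}_L(\zeta+1)$ produces exactly $d_L(d_R-1)^{\lceil\zeta/2\rceil}(d_L-1)^{\lfloor\zeta/2\rfloor}$, yielding the desired inequality for $E_L(\zeta)$; the argument for $E_R(\zeta)$ is identical with the root now of degree $d_R$. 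The main obstacle is conceptual rather than computational: a naive first-order resolvent identity combined with Cauchy--Schwarz only produces one factor of $|\widetilde{\psi}_L(\zeta)|$ paired with $\sqrt{\|\chi\|^2}$, which is far too weak. The Schur complement reformulation is what lets the decay of $\widetilde{\psi}_L(\zeta)$ appear on \emph{both} sides of a single resolvent quadratic form $\chi^T F^{-1}\chi$, producing the sharper $|\widetilde{\psi}_L(\zeta)|^2\|\chi\|^2$ scaling.
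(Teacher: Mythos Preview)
Your proof is correct and follows essentially the same approach as the paper: partition the vertices into those within distance $\zeta$ of the root and the rest, apply the block inversion formula (Proposition~\ref{eq:matrixinversion}), identify the row $(K^{-1}B)_{\text{root},\cdot}$ as $(d_R-1)^{-1/2}\widetilde{\psi}_L(\zeta)\chi$, and bound the resulting quadratic form $\chi^T F^{-1}\chi$ by $\|\chi\|^2/\Im(z)$. The paper only sketches this argument and cites \cite[Lemma~10]{DuP} for the details, so you have in fact supplied what the paper omits; your count $\|\chi\|^2=C_L=d_L(d_R-1)^{\lceil\zeta/2\rceil}(d_L-1)^{\lfloor\zeta/2\rfloor}$ is exactly the calculation the paper alludes to at the end.
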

  \begin{proof}
    Let $C_L$ and $C_R$ denote the number of vertices
    of distance $\zeta+1$ from the root in the biregular
    trees $\widetilde{T}_L$ and $\widetilde{T}_R$, respectively.
    We claim that
    \begin{align}
      |E_L(\zeta)| &\leq \frac{\big|\widetilde{\psi}_L(\zeta)\big|^2
        C_L}{(d_R-1)\Im(z)},      \label{eq:ELER1}\\
      |E_R(\zeta)| &\leq \frac{\big|\widetilde{\psi}_R(\zeta)\big|^2
        C_R}{(d_R-1)\Im(z)}.
      \label{eq:ELER2}
    \end{align}
    These statements can be proven exactly as in
    \cite[Lemma~10]{DuP}; we will only give a sketch here.
    The gist of the argument is to partition the vertices
    of $G$ into two parts, those at distance $\zeta$ or less from the root and those at distance
    greater than $\zeta$. This decomposes $A$ into blocks, one of which is simply $\widetilde{H}_L$
    or $\widetilde{H}_R$. An application of Proposition~\ref{eq:matrixinversion} and some calculations
    then prove \eqref{eq:ELER1} and \eqref{eq:ELER2}.
    The rest is simply a calculation of $C_L$ and $C_R$.
  \end{proof}
  Now, we take a sequence of graphs as above and let $\zeta$ grow to infinity.
  \begin{lemma}\label{lem:detgraphs}
    Let $G$ be a sequence of deterministic graphs, each with a root
    with an acyclic $2\zeta+1$ neighborhood.
    Suppose $d_R\to\infty$ and $\zeta\to\infty$.  Fix $\eps>0$, and let $z$
    be a sequence with $|\Re(z)|>\eps$ and $|\Im(z)|
    \geq 1/d_R$.  Suppose that $r^{-2\zeta}
    =o(1/d_R^2)$.   Then either
    \begin{align*}
      \left|\big((d_R-1)^{-1/2}A - z\big)^{-1}_{\text{root,root}} - s_L(z)
      \right| &= O_\eps(1/d_R),\\
      \intertext{or}
      \left|\big((d_R-1)^{-1/2}A - z\big)^{-1}_{\text{root,root}} - s_R(z)
      \right| &= O_\eps(1/d_R),
    \end{align*}
    depending on whether the root of $G$ has degree $d_L$ or $d_R$.
    We use $O_\eps(\cdot)$ to indicate that the constant in the expression
    depends on $\eps$.
  \end{lemma}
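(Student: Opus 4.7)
The plan is to apply the triangle inequality to split the error into a tree-to-limit piece and a graph-to-tree piece:
\begin{align*}
\bigl|\bigl((d_R-1)^{-1/2}A-z\bigr)^{-1}_{\text{root,root}} - s_L(z)\bigr|
\leq |E_L(2\zeta)| + \bigl|\widetilde{\varphi}_L(2\zeta)-s_L(z)\bigr|,
\end{align*}
and show each piece is $O_\eps(1/d_R)$ (with the symmetric argument for a root of degree $d_R$ using $E_R$, $\widetilde{\varphi}_R$, and $s_R$). Since we assume an acyclic $(2\zeta+1)$-neighborhood, the natural depth for Lemma~\ref{lem:Ebound} and for the tree formulas in Lemmas~\ref{lem:almostreg} and~\ref{lem:regtree} is $2\zeta$.

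For the tree-to-limit piece, I would compare $\widetilde{\varphi}_L(2\zeta)$ with $\varphi_L(2\zeta)$ using the explicit formulas: direct algebra yields
\begin{align*}
\widetilde{\varphi}_L(2\zeta)-\varphi_L(2\zeta) = -\frac{1}{z(d_R-1)}\cdot\frac{q_\zeta(z)+\alpha^{-1}q_{\zeta-1}(z)}{q_\zeta(z)}\cdot\frac{q_{\zeta-1}(z)}{q_\zeta(z)-\tfrac{1}{d_R-1}q_{\zeta-1}(z)}.
\end{align*}
Using \eqref{eq:chebw}, the ratio $q_{\zeta-1}(z)/q_\zeta(z)$ converges to $\alpha^{1/2}/w$ as $\zeta\to\infty$, and because $|w|=r>1$ and $|z|\geq\eps$, this ratio stays bounded uniformly in the relevant region. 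Since $\alpha\to y<\infty$, the whole expression is $O_\eps(1/d_R)$. Combining with $|\varphi_L(2\zeta)-s_L(z)|\leq 2\alpha^{-1/2}r^{-2\zeta}/(|z|(1-r^{-2\zeta-2}))$ from Lemma~\ref{lem:bounds}, and using the hypothesis $r^{-2\zeta}=o(1/d_R^2)$ together with $|z|\geq\eps$, the tree-to-limit piece is $O_\eps(1/d_R)$.

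For the graph-to-tree piece, I would apply Lemma~\ref{lem:Ebound} at depth $2\zeta$:
\begin{align*}
|E_L(2\zeta)| \leq \frac{|\widetilde{\psi}_L(2\zeta)|^2\, d_L(d_R-1)^\zeta(d_L-1)^\zeta}{(d_R-1)\Im(z)}.
\end{align*}
The same comparison trick shows $|\widetilde{\psi}_L(2\zeta)|\leq 2|\psi_L(2\zeta)|$ for large $d_R$, so Lemma~\ref{lem:bounds} gives $|\widetilde{\psi}_L(2\zeta)|^2 = O\bigl(\alpha^\zeta(d_R-1)^{-2\zeta}r^{-2\zeta}/|z|^2\bigr)$. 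Substituting and using the identity $\alpha^\zeta(d_L-1)^\zeta=(d_R-1)^\zeta$ (from the definition $\alpha=(d_R-1)/(d_L-1)$), all but a single factor of $(d_R-1)$ cancels, leaving
\begin{align*}
|E_L(2\zeta)| = O_\eps\!\left(\frac{r^{-2\zeta}\,d_L}{(d_R-1)\Im(z)}\right).
\end{align*}
With $d_L\leq d_R$ and $\Im(z)\geq 1/d_R$, this is $O_\eps(r^{-2\zeta}d_R) = o(1/d_R)$ by hypothesis.

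The main obstacle is the bookkeeping: tracking the $\zeta$-versus-$2\zeta$ indexing, verifying that $q_{\zeta-1}/q_\zeta$ and $(1-\tfrac{1}{d_R-1}q_{\zeta-1}/q_\zeta)^{-1}$ remain uniformly bounded in the region $\{|\Re z|>\eps\}$ (this is where keeping $z$ bounded away from the branch cut $[0,\infty)$ matters), and ensuring the powers of $d_L$, $d_R$, and $\alpha$ cancel cleanly so that the truncation error is genuinely of order $r^{-2\zeta}d_R$ rather than something larger. Everything else is routine combination of the ingredients already assembled in Sections~\ref{subsec:trees} and~\ref{subsec:treestographs}.
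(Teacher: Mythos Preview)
Your proposal is correct and follows essentially the same route as the paper: the same triangle-inequality split into $|E_L(2\zeta)|$ and $|\widetilde{\varphi}_L(2\zeta)-s_L(z)|$, the same use of Lemmas~\ref{lem:bounds} and~\ref{lem:Ebound}, and the same cancellation of the $(d_R-1)^{\pm\zeta}$ and $\alpha^\zeta$ factors. The only cosmetic difference is that the paper packages both the $\widetilde{\varphi}_L$-versus-$\varphi_L$ and $\widetilde{\psi}_L$-versus-$\psi_L$ comparisons through a single multiplicative factor $\beta=q_\zeta(z)\big/\bigl(q_\zeta(z)-\tfrac{1}{d_R-1}q_{\zeta-1}(z)\bigr)=1+O(1/d_R)$, whereas you compute the difference and the ratio separately---but the content is identical.
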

  \begin{proof}
    Consider the case where the root of $G$ has degree $d_L$.
    We will proceed in two steps, bounding first $|\widetilde{\varphi}_L(2\zeta)
    -s_L(z)|$ and then $E_L(2\zeta)$.
    
    We define the quantity
    \begin{align*}
      \beta &= \frac{q_{2\zeta}(z)}{q_{2\zeta}(z)-(d_R-1)^{-1}q_{2\zeta-1}(z)}\\
        &=\frac{1}{1-(d_R-1)^{-1}\frac{w^{-1}(1-w^{-2\zeta})}{1-w^{-2\zeta-2}}}
    \end{align*}
    This is of interest because $\widetilde{\varphi}_L(2\zeta)=
    \beta\varphi_L(2\zeta)$.
    With the assumptions of this lemma, one can calculate directly
    that $|\beta| = 1+O(1/d_R)$. By these assumptions
    and Lemma~\ref{lem:bounds},
    $|\varphi_L(2\zeta)-s_L(z)|=o_{\eps}(1/d_R)$.
    Since $s_L(z)$ is bounded for $|z|>\eps$, this also
    implies that $\varphi_L(2\zeta)=O_{\eps}(1)$.
    Thus
    \begin{align*}
      |\widetilde{\varphi}_L(2\zeta)-s_L(z)|&=
       \big|\beta\varphi_L(2\zeta)-s_L(z)\big|\\
       &\leq |\varphi_L(2\zeta)-s_L(z)|+ |(\beta-1)\varphi_L(2\zeta)|\\
       &\leq o_\eps(1/d_R) + O_\eps(1/d_R) = O_\eps(1/d_R).
    \end{align*}
    
    Since $\widetilde{\psi}_L(2\zeta)=\beta\psi_L(2\zeta)$, 
    by Lemma~\ref{lem:bounds} and our bound on $\beta$,
    \begin{align*}
      |\widetilde{\psi}_L(2\zeta)| = \alpha^{\zeta/2}(d_R-1)^{-\zeta}o_\eps(1/d_R).
    \end{align*}
    Combining this with our bound on $E_L(2\zeta)$ 
    from Lemma~\ref{lem:Ebound} gives
    \begin{align*}
      |E_L(2\zeta)| &\leq\frac{o_\eps(1/d_R^2)}{\Im(z)}=o_\eps(1/d_R).
    \end{align*}
    These two bounds prove the lemma.  The case when the root of $G$
    has degree $d_R$ is the same.
  \end{proof}
  \subsection{From deterministic graphs to random graphs}\label{subsec:graphstorandgraphs}
  The main actors of this section will be sequences $s$, $\zeta$,
  and $\eta$.  We will choose $s$ and $z$ 
  in such a way that $s\geq r(z)$, and $\zeta$ will represent
  the size of an acyclic neighborhood in the graph.  We will
  choose $\eta$ so that we can control the Stieltjes transform
  of our graph
  on the set $U=\{z:\ \Im(z)\geq\eta\}$.  The following lemma
  gives us the relation between $s$ and $\eta$:
  \begin{lemma}\label{lem:ellipse}
    Let $r=r(z)$, fix some $s>1$, and let
    $\eta = s^{1/2}-s^{-1/2}$.  If $\Im(z)\geq\eta$, then
    $r\geq s$.  
  \end{lemma}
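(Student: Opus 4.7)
The plan is to parameterize the boundary circle $|w|=s$ and show that on this circle $\Im(z)\leq\eta$; then, by monotonicity of $s\mapsto s^{1/2}-s^{-1/2}$ on $(1,\infty)$, the contrapositive will yield the lemma.

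First I would derive a closed form for $z^2$ in terms of $w$. Inverting $w=F(\zeta)=\zeta+\sqrt{\zeta^2-1}$ gives $\zeta=(w+w^{-1})/2$, which combined with $\zeta=\tfrac{\sqrt\alpha}{2}(z^2-\alpha^{-1}-1)$ yields $z^2 = (w+w^{-1})/\sqrt\alpha + 1 + \alpha^{-1}$. Setting $w=se^{i\theta}$, the real and imaginary parts are $\Re(z^2) = (s+s^{-1})\cos\theta/\sqrt\alpha+1+\alpha^{-1}$ and $\Im(z^2) = (s-s^{-1})\sin\theta/\sqrt\alpha$.

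Next, writing $z=x+iy$ with $y>0$, the identity $y^2 = (|z^2|-\Re(z^2))/2$ converts $y\leq\eta$ into $\sqrt{\Re(z^2)^2+\Im(z^2)^2}\leq\Re(z^2)+2\eta^2$. A short check, minimizing the RHS over $\theta$ and $s\geq 1$, shows $\Re(z^2)+2\eta^2\geq (1-\alpha^{-1/2})^2\geq 0$, so squaring yields the equivalent polynomial inequality $\Im(z^2)^2 \leq 4\eta^2\bigl(\Re(z^2)+\eta^2\bigr)$.

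The third step is the verification of this inequality. Using the factorization $(s-s^{-1})^2=\eta^2(s+s^{-1}+2)$ and dividing by $\eta^2$, and introducing the abbreviations $T=s+s^{-1}\geq 2$ and $\beta=\alpha^{-1/2}\leq 1$, the target inequality rearranges to the quadratic condition $f(c) := (T+2)\beta^2 c^2 + 4T\beta c + [4(T-1)+(2-T)\beta^2]\geq 0$ for $c=\cos\theta\in[-1,1]$. The upward-opening parabola $f$ has vertex at $c_v=-2T/((T+2)\beta)\leq -1$ (since $2T\geq T+2\geq(T+2)\beta$), and $f(-1)=4(\beta-1)(\beta-(T-1))\geq 0$ because both factors are nonpositive. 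Since $f$ is increasing on $[c_v,\infty)$, which contains $[-1,1]$, and $f(-1)\geq 0$, we get $f(c)\geq 0$ throughout $[-1,1]$, establishing $y\leq\eta$ whenever $|w|=s$.

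To finish, suppose for contradiction that $\Im(z)\geq\eta$ but $|w(z)|=s'<s$. Applying the estimate just proved with $s'$ in place of $s$ gives $\Im(z)\leq (s')^{1/2}-(s')^{-1/2}<s^{1/2}-s^{-1/2}=\eta$, a contradiction. Hence $r(z)=|w(z)|\geq s$.

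The main obstacle is step three. The algebra rests on the clean identity $(s-s^{-1})^2=\eta^2(s+s^{-1}+2)$, and both sign conclusions ($c_v\leq -1$ and $f(-1)\geq 0$) depend on $\alpha\geq 1$, which is inherited from the standing assumption $d_R\geq d_L$.
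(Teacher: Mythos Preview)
Your proof is correct and takes a genuinely different route from the paper's. The paper first treats the case $\alpha=1$ geometrically: it identifies the set $\{|F(\zeta)|<s\}$ as an ellipse with semi-axes $\tfrac12(s\pm s^{-1})$, observes that $z\mapsto \tfrac12 z^2-1$ sends the horizontal line $\Im z=\eta$ to a parabola, and checks that this parabola is tangent to the ellipse at $-\tfrac12(s+s^{-1})$ and otherwise lies to its left. The extension to $\alpha>1$ is then handled by a separate monotonicity argument, showing that the maximum imaginary part of the preimage of the ellipse decreases in $\alpha$.

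By contrast, you parameterize the level curve $|w|=s$ directly, pull back to $z^2$ via $\zeta=(w+w^{-1})/2$, and reduce $\Im z\le\eta$ to a single quadratic inequality $f(\cos\theta)\ge 0$ that you verify uniformly in $\alpha\ge 1$ and $s>1$. The key algebraic inputs---the factorization $(s-s^{-1})^2=\eta^2(s+s^{-1}+2)$, the vertex location $c_v=-2T/((T+2)\beta)\le -1$, and $f(-1)=4(\beta-1)(\beta-(T-1))\ge 0$---are all correct and hinge, as you note, on $\beta=\alpha^{-1/2}\le 1$. Your approach trades the geometric picture for a self-contained computation that avoids the case split in $\alpha$; the paper's approach is shorter to state but leans on the classical ellipse description of the Joukowski level sets and leaves the parabola--ellipse tangency as an exercise. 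Both are valid; yours is more explicit.
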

  \begin{proof}
    First we prove this when $\alpha=1$.
    Consider the set
    $E_{s}=\{z:\ |F(z)|<s\}$.
    This set is the interior of an ellipse whose foci
    are $-1$ and $1$ and whose radii are $\frac{1}{2}(s+s^{-1})$
    and $\frac{1}{2}(s-s^{-1})$ (see \cite[p.~14]{MH}).
    It suffices to show that if $\Im(z)\geq \eta$, then 
    $\frac{1}{2}z^2-1$ lies outside of $E_{s}$.
    To this end, we note that the transformation $z\mapsto \frac{1}{2}
    z^2-1$ takes the region given by $\Im(z)\geq \eta$ to the region
    bounded on the right by the parabola $P=\{\frac12(t^2-\eta^2)-1+
    \eta ti: t\in\RR\}$.  This can be checked to touch $E_{s}$ at
    $-\frac12(s+s^{-1})$ and otherwise to lie to the left of it.
    This proves the lemma when $\alpha=1$.
    
    To extend this to the case where $\alpha>1$, we consider the image
    of $E_{s}$ under the map $z\mapsto \big(\alpha^{-1/2}(2z+\alpha^{1/2}
    +\alpha^{-1/2})\big)^{1/2}$, which is the inverse of $\frac12\sqrt{\alpha}
    (z^2-\alpha^{-1}-1)$.  Our argument for $\alpha=1$
    establishes that in this case, the maximum imaginary part
    of this set is $\eta$.  It is straightforward to check that
    for any $z$, the quantity $\Im\big(\alpha^{-1/2}(2z+\alpha^{1/2}
    +\alpha^{-1/2})\big)^{1/2}$ decreases as $\alpha$ increases, which
    establishes the lemma.
  \end{proof}
  
  For the remainder of this section, let $G$ be a random biregular
  bipartite graph on $m+n$ vertices satisfying \eqref{eq:dc1}--\eqref{eq:dc3}
  as well as the condition $d_R = \exp\big(o(1)\sqrt{\log n}\big)$.
  Let $A$ be the adjacency matrix of $G$.
  We define the sequences
  \begin{align*}
    a &= \min\left(\frac{\log n}{9(\log d_R)^2}, d_R\right),\\
    s &= e^{1/a},\\
    \zeta &= \frac{\log n}{8\log d_R}-1,\\
    \eta &= s^{1/2}-s^{-1/2}.
  \end{align*}
  
  We now show that sufficiently many vertices of $G$ have tree-like
  neighborhoods.
  \begin{lemma}
    Let $J$ be the set of vertices in $G$ whose $2\zeta$-neighborhoods
    are acyclic.  Then  \label{lem:tree2}
    \begin{align*}
      \P\left[1-\frac{|J|}{n+m}\geq \frac{\eta}{d_R}\right] = o(1/n).
    \end{align*}
  \end{lemma}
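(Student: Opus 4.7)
The plan is to adapt the argument of Lemma~\ref{lem:tree}, replacing the fixed radius $r$ there by the growing quantity $2\zeta$. The structural steps are the same; the substantive work is to verify that Proposition~\ref{prop:ev} still delivers sharp enough moment bounds at this scale, and that the resulting second-moment tail bound beats $1/n$ after substituting the explicit value of $\zeta$.

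First I would produce a pathwise upper bound for the number of ``bad'' vertices: if $v\notin J$, then some cycle of length $2s$, with $2\leq s\leq 2\zeta$, lies inside the $2\zeta$-neighborhood of $v$, so $v$ sits within graph-distance $2\zeta-s$ of some vertex of that cycle. Since the $(2\zeta-s)$-neighborhood of a $2s$-cycle has at most $2s(d_R-1)^{2\zeta-s}$ vertices,
\[
n+m-|J|\;\leq\;N^{*}\;:=\;\sum_{s=2}^{2\zeta}2s(d_R-1)^{2\zeta-s}X_s,
\]
exactly as in Lemma~\ref{lem:tree}.

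Next I would use Proposition~\ref{prop:ev} to estimate $\E[N^{*}]$ and $\var[N^{*}]$ via the same telescoping calculation as in Lemma~\ref{lem:tree}. The hypotheses of the proposition ($r=O(\log n)$, $rd_R=o(n)$) hold here because $2\zeta=O(\log n/\log d_R)$ and $d_R=n^{o(1)}$. The real point to check is the variance estimate $\var[X_s]=\mu_s(1+o(1))$ uniformly over $2\leq s\leq 2\zeta$; the error factor $d_R^{2s}(s\alpha^{2s-1}+\alpha^{-s}d_R)/n$ is at most $(d_R^{4\zeta}/n)\cdot n^{o(1)}=n^{-1/2+o(1)}$, using $d_R^{4\zeta}=n^{1/2}/d_R^{4}$ (which follows from the specific choice $\zeta=\log n/(8\log d_R)-1$) and the boundedness of $\alpha$. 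The Lemma~\ref{lem:tree} calculation then yields
\[
\E[N^{*}]=O(d_R^{4\zeta})=O(n^{1/2}/d_R^4),\qquad \var[N^{*}]=O((\log n)^3 d_R^{4\zeta}),
\]
and since $d_R=n^{o(1)}$ the square of the first estimate dominates the second, giving $\E[(N^{*})^2]=O(n/d_R^{8})$.

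Finally I would apply Markov to $(N^*)^2$: from $a\leq d_R$ and $\eta=2\sinh(1/(2a))>1/a\geq 1/d_R$, the threshold satisfies $(n+m)\eta/d_R>n/d_R^2$, so
\[
\P\!\left[1-\tfrac{|J|}{n+m}>\tfrac{\eta}{d_R}\right]\leq \P\!\left[N^{*}>\tfrac{n}{d_R^{2}}\right]\leq \frac{\E[(N^{*})^{2}]}{(n/d_R^{2})^{2}}=O\!\left(\frac{1}{nd_R^{4}}\right)=o(1/n).
\]
I expect the main obstacle to be verifying the uniform variance estimate on $X_s$ across the now-growing range $s\leq 2\zeta$; in Lemma~\ref{lem:tree} the analogous $s$ was bounded so this was automatic, whereas here it depends crucially on the specific choice of $\zeta$ (engineered precisely so that $d_R^{4\zeta}$ is only of order $n^{1/2}$) together with the growth condition $d_R=n^{o(1)}$. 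Once that is in place, the rest reduces to the elementary lower bound $\eta>1/d_R$ and a direct Chebyshev-type computation.
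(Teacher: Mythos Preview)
Your proposal is correct and follows essentially the same approach as the paper: the same pathwise bound $n+m-|J|\leq N^*$, the same moment estimates from Proposition~\ref{prop:ev}, and the same second-moment Markov argument, with the identical observation $\eta>1/d_R$ to control the threshold. The only cosmetic difference is that you substitute $d_R^{4\zeta}=n^{1/2}/d_R^4$ earlier in the calculation, whereas the paper carries $d_R^{4\zeta}$ symbolically until the final line.
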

  \begin{proof}
    This is nearly the same as Lemma~\ref{lem:tree}.  
    We may assume $\zeta$ is an integer by replacing it with $\floor{\zeta}$.
    We define
    \begin{align*}
      N^* &= \sum_{i=2}^{2\zeta}2i(d_R-1)^{2\zeta-i}X_i,
    \end{align*}
    recalling that $X_i$ is the random variable denoting the number of
    $2i$-cycles in $G$.  We have the bound
    $n+m-|J|\leq N^*$.  Now we apply Proposition~\ref{prop:ev} to calculate
    \begin{align*}
      \E[N^*] &= \sum_{i=2}^{2\zeta}2i(d_R-1)^{2\zeta-i}\mu_i\left(1+O\Big(
      \frac{i(i+d_R)}{n}\Big)\right)\\
      &=O\big(d_R^{4\zeta}\big),\\
      \intertext{and}
      \var[N^*] &\leq 2\zeta\sum_{i=2}^{2\zeta}4i^2(d_R-1)^{4\zeta-2i}\mu_i
         \left(1+O\left(\frac{d_R^{2i}(i\alpha^{2i-1}+\alpha^{-i}d_R)}
           {n}\right)\right)\\
         &\leq2\zeta\sum_{i=2}^\zeta 2i(d_R-1)^{4\zeta}
           \left(1+O\left(\frac{n^{4c}(\zeta\alpha^{4\zeta-1}
           +d_R)}{n}\right)\right)\\
         &= O\big(\zeta^3d_R^{4\zeta}\big).
    \end{align*}
    By Markov's inequality,
    \begin{align*}
      \P\left[1-\frac{|J|}{n+m}\geq \frac{\eta}{d_R}\right]
        &\leq \P\left[N^* \geq \frac{(1+\alpha)n\eta}{d_R}\right]\\
        &\leq \frac{O\big(d_R^{8\zeta+2}+\zeta^3d_R^{4\zeta+2}\big)}
          {n^2\eta^2}\\
        &\leq O\big(n^{-1}d_R^{-4} + n^{-3/2}\zeta^3\big)=
          o(1/n).\tag*{\qedhere}
    \end{align*}
  \end{proof}
  Let $J_L$ and $J_R$ denote the sets of vertices with acyclic
  $2\zeta$-neighborhoods in the left and right vertex classes, respectively.
  As in Corollary~\ref{cor:tree}, it is immediate that
  \begin{align*}
    \P\left[\frac{m-|J_L|}{n+m}\geq\frac{\eta}{d_R}\right]&=o(1/n)\\
    \intertext{and}
    \P\left[\frac{n-|J_R|}{n+m}\geq\frac{\eta}{d_R}\right]&=o(1/n).
  \end{align*}
  It is also straightforward to see that this lemma holds
  when we require the vertices to have acyclic $2\zeta+1$ neighborhoods
  rather than just $2\zeta$ neighborhoods.

  We can now apply all of these results to the task of bounding
  the rate of convergence of the Stieltjes transform:
  \begin{thm}\label{thm:stieltjescontrol}
    Fix some $\eps$ and let
    $U$ denote the set of complex numbers 
    \begin{align*}
      U = \{z\in\CC:\ \Re(z)\geq\eps,\ \Im(z)\geq\eta\}.
    \end{align*}
    Let $s_n(z)$ denote the Stieltjes transform
    of $(d_R-1)^{-1}A$.  Then for sufficiently large $C_\eps$,
    \begin{align*}
      \P\left[\sup_{z\in U}|s_n(z)-s(z)|>C_\eps/d_R\right]=o(1/n).
    \end{align*}
  \end{thm}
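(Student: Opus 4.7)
The plan is to prove the pointwise bound $|s_n(z)-s(z)|=O_\eps(1/d_R)$ with probability $1-o(1/n)$ at each fixed $z\in U$, and then upgrade it to the uniform statement by a standard net argument using the Lipschitz bound $|s_n'(z)|,|s'(z)|\leq 1/\Im(z)^2\leq 1/\eta^2$.

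For the pointwise bound, write
\[
s_n(z)=\frac{1}{n+m}\sum_{v\in V(G)}R(v,v),\qquad R=((d_R-1)^{-1/2}A-z)^{-1},
\]
and split the sum according to whether $v\in J_L$, $v\in J_R$, or $v$ is ``bad'' (its $(2\zeta+1)$-neighborhood contains a cycle). For a good vertex $v$, I would like to apply Lemma~\ref{lem:detgraphs} to the realization of $G$ rooted at $v$, obtaining
\[
R(v,v)=\begin{cases} s_L(z)+O_\eps(1/d_R), & v\in J_L,\\ s_R(z)+O_\eps(1/d_R), & v\in J_R.\end{cases}
\]
This requires verifying the hypotheses of Lemma~\ref{lem:detgraphs}: the condition $|\Im(z)|\geq 1/d_R$ follows because $\eta=2\sinh(1/(2a))\geq 1/a\geq 1/d_R$; the condition $r^{-2\zeta}=o(1/d_R^2)$ follows by using Lemma~\ref{lem:ellipse} (which gives $r(z)\geq s=e^{1/a}$ on $U$) together with the chosen $\zeta=\log n/(8\log d_R)-1$, since then $r^{-2\zeta}\leq e^{-2\zeta/a}$ and the two cases for $a$ ($a=\log n/(9(\log d_R)^2)$ or $a=d_R$) both yield the required $o(d_R^{-2})$ bound thanks to the stronger growth condition $d_R=\exp(o(1)\sqrt{\log n})$.

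For bad vertices, use only the trivial bound $|R(v,v)|\leq 1/\Im(z)\leq 1/\eta$. By Lemma~\ref{lem:tree2} (and the easy analogue for $J_L,J_R$), with probability $1-o(1/n)$ the number of bad vertices is at most $(n+m)\eta/d_R$, so their total contribution to $s_n(z)$ is $O(1/d_R)$. Also $|J_L|/(n+m)=m/(n+m)-O(\eta/d_R)$ and similarly for $|J_R|$, so the good-vertex contribution equals
\[
\frac{m}{n+m}s_L(z)+\frac{n}{n+m}s_R(z)+O_\eps(1/d_R)=s(z)+O_\eps(1/d_R),
\]
using $|s_L(z)|,|s_R(z)|=O_\eps(1)$ on $U$ (because $|z|\geq\eps$) and the definition $s=(\alpha s_L+s_R)/(1+\alpha)$ together with the fact that $\alpha$ is bounded. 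This gives the pointwise estimate.

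To pass to the uniform bound on $U$, I would discretize. Outside the disk $|z|\leq 2d_R$, both $|s_n(z)|$ and $|s(z)|$ are at most $1/\Im(z)\leq 1/|z|$, yielding $|s_n-s|\leq 1/d_R$ trivially. Inside $\{z\in U:|z|\leq 2d_R\}$, both functions are Lipschitz with constant $O(1/\eta^2)=O(d_R^2)$, so a grid of mesh $1/d_R^3$ inside this region has at most $\mathrm{poly}(d_R)=\exp(o(1)\sqrt{\log n})$ points, which is $n^{o(1)}$. Applying the pointwise bound at each grid point and taking a union bound keeps the failure probability at $o(1/n)$, and the Lipschitz estimate transfers the bound to all $z\in U$ at the cost of a constant factor absorbed into $C_\eps$. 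The main obstacle is making sure all of the asymptotic conditions (particularly $r^{-2\zeta}=o(1/d_R^2)$ with our specific $\zeta$ and $\eta$) are compatible; the strengthened hypothesis $d_R=\exp(o(1)\sqrt{\log n})$ is exactly what makes the budget balance.
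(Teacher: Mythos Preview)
Your core argument---splitting vertices into $J_L$, $J_R$, and bad vertices, bounding the bad contribution by $|R(v,v)|\leq 1/\eta$ together with Lemma~\ref{lem:tree2}, and applying Lemma~\ref{lem:detgraphs} to the good vertices after verifying $\eta\geq 1/d_R$ and $r^{-2\zeta}\leq s^{-2\zeta}=o(1/d_R^2)$---is exactly what the paper does.

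The difference is your second step, the net/union-bound argument, which the paper omits entirely. The point you are missing is that the only randomness in the estimate is the event $\Omega=\{(m-|J_L|)/(m+n)<\eta/d_R\text{ and }(n-|J_R|)/(m+n)<\eta/d_R\}$ from Lemma~\ref{lem:tree2}, and this event does not depend on $z$. Once you condition on $\Omega$, Lemma~\ref{lem:detgraphs} gives a \emph{deterministic} bound $|R(v,v)-s_{L/R}(z)|=O_\eps(1/d_R)$, and because $r(z)\geq s$ uniformly on $U$ (Lemma~\ref{lem:ellipse}), the implicit constant is uniform over $z\in U$. Hence on $\Omega$ one has $\sup_{z\in U}|s_n(z)-s(z)|\leq C_\eps/d_R$ directly, with no discretization needed. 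Your net argument is not wrong (the ``union bound'' is over copies of the same event $\Omega$, so it costs nothing), but it obscures this structure. As a side remark, your claim that $1/\Im(z)\leq 1/|z|$ outside $|z|\leq 2d_R$ is false as written; one would instead use that the spectrum of $(d_R-1)^{-1/2}A$ is bounded---but since the net step is unnecessary, this is moot.
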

  \begin{proof}
    Let $J$ denote the vertices with acyclic $2\zeta+1$ neighborhoods.
    We condition on the event that $(m-|J_L|)/(m+n)<\eta/d_R$ and
    $(n-|J_R|/(m+n)<\eta/d_R$, which
    by the discussion following
    Lemma~\ref{lem:tree2} holds with probability $1-o(1/n)$.
    We call this event $\Omega$.
    We compute $s_n(z)$ for $z\in U$, breaking up the vertices into
    $J_L$, $J_R$, and the remaining vertices:
    \begin{align*}
      s_n(z) &= \frac{1}{m+n}
       \sum_{v\in J_L}\big((d_R-1)^{-1/2}A-z\big)^{-1}_{v,v}
               + \frac{1}{m+n} \sum_{v\in J_R}\big((d_R-1)^{-1/2}
                  A-z\big)^{-1}_{v,v}\\
               &\phantom{=}+ \frac{1}{n+m} \sum_{v\not\in J}
                  \big((d_R-1)^{-1/2}A-z\big)^{-1}_{v,v}.
    \end{align*}
    We begin with the third term.  Applying the bound
    $\big((d_R-1)^{-1/2}A-z\big)^{-1}_{v,v}\leq \eta^{-1}$, we have
    \begin{align}
      \left|\frac{1}{n+m} \sum_{v\not\in J}
                  \big((d_R-1)^{-1/2}A-z\big)^{-1}_{v,v}\right|
            &\leq \frac{m+n-|J|}{(m+n)\eta}<\frac{1}{d_R}\label{eq:badverts}
    \end{align}
    on the event $\Omega$.
    
    Since every vertex in $J$ has an acyclic $2\zeta+1$ neighborhood, we will
    apply Lemma~\ref{lem:detgraphs} to estimate the first two terms.
    First, we confirm that the conditions of the lemma hold.
    By expanding $\eta$ as a power series, we deduce the bound
    $\eta\geq 1/a\geq 1/d_R$.  We can calculate
    \begin{align*}
      s^{-2\zeta} &= \exp\left(-\frac2a\left(\frac{\log n}{8\log d_R}-1\right)
          \right)\\
          &\leq \exp\left(-\frac{9}{4}\log d_R + o(1)\right)=o(1/d_R^2).
    \end{align*}
    By Lemma~\ref{lem:ellipse}, it follows from $\Im(z)\geq\eta$ that
    $r(z)\geq s$.  Hence for any sequence $z\in U$, we have
    $r(z)^{-2\zeta}\leq s^{-2\zeta}=o(1/d_R^2)$.
    Thus the conditions of Lemma~\ref{lem:detgraphs} hold,
    and so for all $v\in J_L$,
    \begin{align*}
      \left|\big((d_R-1)^{-1/2}A-z\big)^{-1}_{v,v}-s_L(z)\right|=O_\eps(1/d_R),
    \end{align*}
    and for all $v\in J_R$,
    \begin{align*}
      \left|\big((d_R-1)^{-1/2}A-z\big)^{-1}_{v,v}-s_R(z)\right|=O_\eps(1/d_R).
    \end{align*}
    Combining these estimates
    with \eqref{eq:badverts},
    \begin{align*}
      s_n(z)&=\frac{|J_L|}{m+n}s_L(z)+\frac{|J_R|}{m+n}s_R(z) + O_\eps(1/d_R)\\
        &= \frac{m}{m+n}s_L(z)+\frac{n}{m+n}s_R(z) + O_\eps(1/d_R)\\
        &= s(z)+O_\eps(1/d_R)
    \end{align*}
    on the event $\Omega$, which proves the theorem.
  \end{proof}

  We now restate and prove our local convergence law.
  \begin{restate}
    Fix $\eps>0$.  Let $\mu_n$ be the ESD
    of $(d_R-1)^{-1/2}A$, and let $\mu$ be the limiting ESD defined
    in
    Corollary~\ref{cor:mudist}.
    There exists a constant $C_\eps$
    such that for all sufficiently large $n$ and $\delta>0$,
    for any interval $I\subset\RR$ 
    avoiding
    $[-\eps,\eps]$ and with length $|I|\geq\max\big(2\eta,\eta/(-\delta
    \log\delta)\big)$, it holds that
    \begin{align*}
      |\mu_n(I)-\mu(I)|<\delta C_\eps|I|
    \end{align*}
    with probability $1-o(1/n)$.
  \end{restate}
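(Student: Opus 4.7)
The plan is to derive Theorem~3 from Theorem~\ref{thm:stieltjescontrol} by invoking Lemma~64 of \cite{TaV}, the standard device for converting Stieltjes-transform control into control on $\mu_n(I)-\mu(I)$ for short intervals $I$. I would first apply Theorem~\ref{thm:stieltjescontrol} directly to obtain, with probability $1-o(1/n)$, the uniform bound $|s_n(z)-s(z)|\leq C_\eps/d_R$ on the set $\{z:\Re z\geq\eps,\ \Im z\geq\eta\}$. Since the non-zero eigenvalues of a bipartite graph's adjacency matrix come in $\pm\lambda$ pairs, both $\mu_n$ and the limit $\mu$ of Corollary~\ref{cor:mudist} are symmetric about $0$, so $s_n(-\bar z)=-\overline{s_n(z)}$ and similarly for $s$; the estimate therefore extends automatically to $\{\Re z\leq-\eps,\ \Im z\geq\eta\}$ on the same event, so Stieltjes control is in hand on both sides.

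Conditioning on this high-probability event, I would then argue deterministically. Given an interval $I$ avoiding $[-\eps,\eps]$ with $|I|\geq 2\eta$, the Tao--Vu lemma, whose hypotheses hold because the continuous part of $\mu$ (given by \eqref{eq:density}) is bounded on intervals avoiding $0$, yields
\begin{equation*}
|\mu_n(I)-\mu(I)|\leq C\bigl(T|I|+\eta\log(1/\eta)\bigr),
\end{equation*}
where $T=C_\eps/d_R$ is the uniform Stieltjes-transform error at height $\eta$. For the right-hand side to be bounded by $\delta C_\eps|I|$ one needs $\delta\gtrsim 1/d_R$ (automatic for $n$ large, after enlarging $C_\eps$) and $|I|\gtrsim \eta\log(1/\eta)/\delta$; the stated hypothesis $|I|\geq \eta/(-\delta\log\delta)$ supplies the latter up to constants, which are once again absorbed into $C_\eps$.

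The main obstacle at this stage is almost entirely bookkeeping: the substantive work---tree approximation via Proposition~\ref{prop:ev}, the explicit resolvent computations on biregular trees in Lemmas~\ref{lem:almostreg} and~\ref{lem:regtree}, and the transfer to random graphs culminating in Theorem~\ref{thm:stieltjescontrol}---is already done. What remains is to check the density hypothesis of the Tao--Vu lemma on our restricted intervals (it holds because the continuous density is bounded on any set avoiding a neighbourhood of $0$ and the support endpoints pose no problem thanks to the vanishing factor $\sqrt{(b^2-x^2)(x^2-a^2)}$) and to verify that the minimum-length condition supplied in Theorem~3 is strong enough to swallow both the Stieltjes-transform error $T|I|$ and the smoothing error $\eta\log(1/\eta)$ inside the target $\delta C_\eps|I|$.
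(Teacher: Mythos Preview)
Your approach is the same as the paper's: invoke Theorem~\ref{thm:stieltjescontrol}, then apply Lemma~64 of \cite{TaV} after checking that $\mu$ has bounded density away from $0$, and verify that the length hypothesis on $I$ absorbs the smoothing error. One small correction worth making: the Tao--Vu smoothing error is $O_\eps\bigl(\eta\log(|I|/\eta)\bigr)$, not $\eta\log(1/\eta)$---with the correct form the hypothesis $|I|\geq\eta/(-\delta\log\delta)$ does yield $\eta\log(|I|/\eta)=O(\delta|I|)$ (as the paper notes, citing \cite{TaV}), whereas your cruder bound would not be controlled by that hypothesis when $\eta$ is very small; your explicit use of the $\pm\lambda$ symmetry of $\mu_n$ and $\mu$ to extend the Stieltjes estimate to $\Re z\leq-\eps$ is a detail the paper leaves implicit.
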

  \begin{proof}
    This theorem follows from the arguments of \cite[Lemma~64]{TaV},
    which we will sketch.  Define
    \begin{align*}
      F(y) = \frac{1}{\pi}\int_I\frac{\eta}{\eta^2+(y-x)^2}\,dx.
    \end{align*}
    This function $F(y)$ approximates the indicator function on the interval
    $I$, and the following statements hold:
    \begin{align*}
      \int F(y)\,d\mu(y) &= \mu(I) + O_\eps\left(\eta\log\frac{|I|}{\eta}\right),\\
      \int F(y)\,d\mu_n(y) &= \mu_n(I)+ O_\eps\left(\eta\log\frac{|I|}{\eta}\right).
    \end{align*}
    The proofs of these statements in \cite[Lemma~64]{TaV} have $\mu$
    as the semicircle law, but they apply just as well to our limiting
    measure $\mu$; the only thing necessary for the proof to go through
    is that $\mu$ has a bounded density outside of the interval
    $[-\eps,\eps]$.  On the event $\Omega$ of the previous theorem,
    \begin{align*}
      \left|\int F(y)\,d\mu(y)-\int F(y)\,d\mu_n(y)\right|
        &=\frac{1}{\pi}\left|\int_I\big(\Im(s(x+\eta i))-\Im(s_n(x+\eta i))\big)
          \,dx\right|\\
        &\leq \frac{C_\eps|I|}{\pi d_R}.
    \end{align*}
    As observed in \cite{TaV}, it follows from the condition
    $|I|\geq \eta/(-\delta\log\delta)$ that
    $\eta\log\frac{|I|}{\eta}=O(\delta|I|)$.
    Since $d_R\to\infty$,  
    for $n$ sufficiently large,
    \begin{align*}
      \left|\mu(I)-\mu_n(I)\right| &\leq
        C_\eps\delta|I|
    \end{align*}
    for some constant $C_\eps$ (not necessarily the same one as before)
    on the event $\Omega$.
  \end{proof}

  \section*{Appendix}

  Our goal here is to prove Proposition~\ref{prop:ev}.
  We mention that it is possible to prove much more than this.
  The main result of \cite{MWW} is that the distribution
  of short cycles in a random regular graph is approximately Poisson,
  and this result holds for biregular bipartite graphs as well, with suitable
  modifications of the proofs.
  
  We will use a theorem from \cite{McK} that gives us the probability
  that $G$ contains some subgraph $L\subset K_{m,n}$.
  For any $v\in K_{m,n}$, let $g_v$ and $l_v$ denote the
  the degree of $v$ considered as a vertex of $G$ and of $L$, respectively.
  Let $l_{\max}$ be the largest value of $l_i$.
  Consider $L$ to be a collection of edges, so that $|L|$ is the
  number of edges of $L$.
  The notation $[x]_a$ denotes the falling factorial,
  $x(x-1)\cdots(x-a+1)$.
  \begin{prop}\label{thm:mcgbound}
    Let $L\subset K_{m,n}$.
    \begin{enumerate}[(a)]
    \item\label{item:gub}
    If $|L|+2d_R(d_R+l_{\max}-2)\leq nd_R-1$, then
    \begin{align*}
      \P[L\subset G]&\leq\frac{\prod [g_i]_{l_i}}{[nd_R-4d_R^2-1]_{|L|}}.
    \end{align*}    
    \item\label{item:glb}
      If $nd_R-2d_R(d_R+l_{\max}-1)-1-|L|\geq d_Rl_{\max}$, then
      \begin{multline*}
        \P[L\subset G]\geq\frac{\prod[g_i]_{l_i}}{[nd_R-1]_{|L|}}\\
        {}\times\left[\left(1-\frac{d_Rl_{\max}}{nd_R-|L|-2d_R(d_R+l_{\max}-1)-1}
        \right)/\left(1+\frac{d_R^2}{nd_R-2d_R(d_R+l_{\max}-2)-1-|L|(e-1)/e}
        \right)
        \right]^{|L|}
      \end{multline*}
    \end{enumerate}
  \end{prop}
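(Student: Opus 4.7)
The plan is to use the configuration model for biregular bipartite graphs: represent each left vertex by $d_L$ ``points'' and each right vertex by $d_R$ ``points'', and take a uniform random perfect matching between the left and right points (which exists since $md_L=nd_R$). Each configuration projects to a bipartite multigraph with the prescribed degree sequence, and conditioning on the event $S$ that this projection is simple (parallel edges excluded; loops are automatically impossible in the bipartite setting) yields the uniform distribution on $(d_L,d_R)$-biregular bipartite graphs. Hence
\[
\P[L\subset G] \;=\; \frac{\P_{\mathrm{conf}}[L\subset G \text{ and } S]}{\P_{\mathrm{conf}}[S]},
\]
where $\P_{\mathrm{conf}}$ denotes the uniform measure on configurations.

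The first step is to compute $\P_{\mathrm{conf}}[L\subset G]$ exactly. For each vertex $v$ incident to $L$, choose which of its $g_v$ points will be used by the $l_v$ edges of $L$ at $v$: this contributes $[g_v]_{l_v}$ ordered choices, with the ordering synchronizing the left and right sides along each $L$-edge. The remaining $nd_R-|L|$ points on each side may be matched freely, giving
\[
\P_{\mathrm{conf}}[L\subset G] \;=\; \frac{\prod_v [g_v]_{l_v}}{[nd_R]_{|L|}}.
\]

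For the upper bound (a), I would use $\P_{\mathrm{conf}}[L\subset G\text{ and }S]\leq\P_{\mathrm{conf}}[L\subset G]$ in the numerator and lower-bound $\P_{\mathrm{conf}}[S]$ by a switching argument on configurations: given a configuration with a parallel edge, a switching operation exchanges two pairings to destroy it, and one enumerates forward/backward switchings carefully. The bookkeeping, after accounting for the at most $2d_R(d_R+l_{\max}-2)$ points near $L$ that are ``blocked'' from being legal switching partners, replaces $nd_R$ by $nd_R-4d_R^2-1$ in the denominator falling factorial; the hypothesis of (a) is exactly what makes this falling factorial positive.

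The lower bound (b) is the more delicate direction: one must lower-bound $\P_{\mathrm{conf}}[S\mid L\subset G]$ and simultaneously upper-bound $\P_{\mathrm{conf}}[S]$. The factor $(1-d_Rl_{\max}/(\cdots))^{|L|}$ handles the extra ``pollution'' multi-edges created when points adjacent to $L$-vertices collide (up to $d_Rl_{\max}$ such collisions), while $(1+d_R^2/(\cdots))^{|L|}$ controls the ordinary multi-edges via the reverse switching estimate; both are of the form $\prod(1\pm x_i)$ with the $x_i$ being expected switching ratios. The main obstacle will be the careful switching bookkeeping: tracking which ordered pairs of configurations are in bijection under each switching, identifying precisely which are ``blocked'' by the presence of $L$, and verifying the estimates remain valid uniformly for $|L|$ up to the stated threshold. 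Once the switching framework is set up, combining the numerator and denominator estimates to obtain the two stated inequalities is essentially algebraic.
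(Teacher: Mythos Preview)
The paper does not prove this proposition at all: its entire proof reads ``This is an application of \cite[Theorem~3.5]{McK}, with the set $H$ from that theorem equal to $\emptyset$.'' So you are in effect proposing to reprove McKay's theorem rather than to reproduce the paper's argument.

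Your high-level instinct (switchings) is the right one, but the specific route via the configuration model and conditioning on simplicity does \emph{not} yield the stated bounds, and part~(a) as you have set it up has a genuine gap. You propose
\[
\P[L\subset G]\;\le\;\frac{\P_{\mathrm{conf}}[L\subset G]}{\P_{\mathrm{conf}}[S]}
\;=\;\frac{\prod_v[g_v]_{l_v}}{[nd_R]_{|L|}\,\P_{\mathrm{conf}}[S]},
\]
and then hope that a switching lower bound on $\P_{\mathrm{conf}}[S]$ turns the denominator into $[nd_R-4d_R^2-1]_{|L|}$. But $\P_{\mathrm{conf}}[S]$ does not depend on $L$ at all, and in the biregular bipartite configuration model it is of order $\exp\bigl(-\tfrac12(d_L-1)(d_R-1)\bigr)$, which for even moderate degrees is far smaller than the ratio $[nd_R-4d_R^2-1]_{|L|}/[nd_R]_{|L|}$ that you would need (take $|L|$ small to see this). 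Separately bounding numerator and denominator throws away exactly the cancellation that makes the proposition true. The same issue infects your plan for~(b): the two correction factors you write down are the right \emph{shape}, but they do not arise from comparing $\P_{\mathrm{conf}}[S\mid L\subset G]$ to $\P_{\mathrm{conf}}[S]$ in the way you describe.

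McKay's actual argument never passes through the configuration model. He works directly in the set of simple $(d_L,d_R)$-biregular graphs and, ordering the edges of $L$ as $e_1,\ldots,e_{|L|}$, uses switchings to bound the ratio $|\{G:\,e_1,\ldots,e_k\in G\}|\big/|\{G:\,e_1,\ldots,e_{k-1}\in G\}|$ for each $k$. A forward switching deletes $e_k$ and one auxiliary edge and reconnects; counting legal forward versus backward switchings gives upper and lower bounds on this ratio of the form $[g_u]_{l_u}[g_w]_{l_w}/\bigl(nd_R-O(d_R^2)-k\bigr)$ (with the precise constants producing the $4d_R^2$, the $2d_R(d_R+l_{\max}-1)$, etc.). Telescoping over $k$ is what manufactures the falling factorial $[nd_R-4d_R^2-1]_{|L|}$ and the product $\prod[g_i]_{l_i}$ simultaneously. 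If you want to write out a self-contained proof, that edge-by-edge switching on simple graphs is the mechanism you need; the configuration-model detour cannot be made to give these particular inequalities.
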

  \begin{proof}
  This is an application of \cite[Theorem~3.5]{McK}, with the set 
  $H$ from that theorem equal to $\emptyset$.
  \end{proof}
  
  We are most interested in when $L$ is a cycle with $2r$ edges, in which
  case the above theorem reduces to the following:
    \begin{cor}\label{thm:mcbound}
    Let $L\subset K_{m,n}$ be a 
    cycle with $2r$ edges whose presence in $G$
    we wish to test.  
    \begin{enumerate}[(a)]
      \item\label{item:ub}
        If $2r+2d_R^2\leq nd_R-1$, then
        \begin{align*}
          \P[L\subset G]&\leq
          \frac{ d_L^r(d_L-1)^rd_R^r(d_R-1)^r}{[nd_R-2d_R^2-1]_{2r}}.
        \end{align*}      
      \item\label{item:lb}
        If $nd_R-2d_R(d_R+1)-1-2r\geq 2d_R$, then
        \begin{multline*}
          \P[L\subset G]\geq\frac{ d_L^r(d_L-1)^rd_R^r(d_R-1)^r}
            {[nd_R-1]_{2r}}\\
           {}\times\left[\left(1-\frac{2d_R}{nd_R-2d_R(d_R+1)-1-2r}\right)\Big/
             \left(1+\frac{d_R^2}{nd_R-2d_R^2-1-2r(e-1)/e}\right)
             \right]^{2r}.
        \end{multline*}
    \end{enumerate}
  \end{cor}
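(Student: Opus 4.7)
The plan is to specialize the general bound of Proposition~\ref{thm:mcgbound} to the case where $L$ is a $2r$-cycle in $K_{m,n}$. The first step is to identify the combinatorial parameters of $L$. Because $K_{m,n}$ is bipartite, any cycle in it must have even length and alternate between the two vertex classes, so a $2r$-cycle has exactly $r$ vertices in the left class (each with $g_v = d_L$) and $r$ vertices in the right class (each with $g_v = d_R$), while $l_v = 2$ for every vertex of $L$; thus $l_{\max} = 2$ and $|L| = 2r$.

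The second step is to compute the numerator $\prod_v [g_v]_{l_v}$ appearing in Proposition~\ref{thm:mcgbound}. With the data just recorded, this factors as $[d_L]_2^{\,r}\,[d_R]_2^{\,r} = d_L^r(d_L-1)^r\,d_R^r(d_R-1)^r$, which is exactly the numerator appearing in both parts of Corollary~\ref{thm:mcbound}.

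The third step is to substitute $l_{\max} = 2$ and $|L| = 2r$ into the hypotheses of Proposition~\ref{thm:mcgbound}. In part~(a) the condition $|L| + 2d_R(d_R + l_{\max} - 2) \leq nd_R - 1$ collapses to $2r + 2d_R^2 \leq nd_R - 1$, and in part~(b) the condition $nd_R - 2d_R(d_R + l_{\max} - 1) - 1 - |L| \geq d_R l_{\max}$ collapses to $nd_R - 2d_R(d_R + 1) - 1 - 2r \geq 2d_R$; these match the hypotheses stated in the corollary verbatim.

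Finally, plugging the same values into the two bounds of Proposition~\ref{thm:mcgbound} and simplifying produces the stated inequalities. I do not expect any real obstacle here: the corollary is a direct specialization, and the entire argument is bookkeeping. The only point where care is needed is using the bipartite structure to split the vertices of the cycle evenly between the two classes so that the falling-factorial product collapses cleanly into $d_L^r(d_L-1)^r\,d_R^r(d_R-1)^r$; once that count is pinned down, the rest is mechanical substitution.
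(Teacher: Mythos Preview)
Your proposal is correct and matches the paper exactly: the paper gives no separate proof for this corollary, simply remarking that Proposition~\ref{thm:mcgbound} ``reduces to'' it when $L$ is a $2r$-cycle. Your bookkeeping---$r$ vertices in each class with $l_v=2$ throughout, hence $l_{\max}=2$, $|L|=2r$, and $\prod_v[g_v]_{l_v}=[d_L]_2^{\,r}[d_R]_2^{\,r}=d_L^r(d_L-1)^rd_R^r(d_R-1)^r$---is precisely the intended specialization.
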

    
  \begin{proof}[Proof of Proposition~\ref{prop:ev}]
    Let $L\subset K_{m,n}$ be a cycle with $2r$ edges.
    We start by showing that
    \begin{align}
      \P[L\subset G]= \frac{(d_L-1)^r(d_R-1)^r\alpha^{-r}}{n^{2r}}
      \left(1 + O\left(\frac{rd_R}{n}+\frac{r^2}{nd_R}\right)
      \right).\label{eq:pbound}
    \end{align}

    Since $d_R=o(n)$, all of the conditions for
    Proposition~\ref{thm:mcgbound} and Corollary~\ref{thm:mcbound} apply.  By
    Corollary~\ref{thm:mcbound}\ref{item:ub},
    \begin{align}
        \P[L\subset G]&\leq\frac{(d_L-1)^r(d_R-1)^r\alpha^{-r}d_R^{2r}}
        {(nd_R-2d_R^2-2r)^{2r}}\nonumber\\
        &=\frac{(d_L-1)^r(d_R-1)^r\alpha^{-r}}{n^{2r}}\left(
        \frac{nd_R}{nd_R-2d_R^2-2r}\right)^{2r}\nonumber\\
        &=\frac{(d_L-1)^r(d_R-1)^r\alpha^{-r}}{n^{2r}}\left(1+
        \frac{2d_R^2+2r}{nd_R-2d_R^2-2r}\right)^{2r}\nonumber\\
        &=\frac{(d_L-1)^r(d_R-1)^r\alpha^{-r}}{n^{2r}}\left(
        1+O\left(\frac{rd_R}{n}+\frac{r^2}{nd_R}\right)\right).\label{eq:ubasym}
    \end{align}
    The last line follows from the fact that if $x>-1$,
    \begin{align*}
      (1+x)^r\leq e^{rx}=1+O(rx)
    \end{align*}
    as $rx\to 0$.
    
    From Corollary~\ref{thm:mcbound}\ref{item:lb},
      \begin{align*}
      \begin{split}
        \P[L\subset G]&\geq\frac{d_L^r(d_L-1)^rd_R^r(d_R-1)^r}
          {(nd_R)^{2r}}\\
        &\quad
        \times\left[\left(1-\frac{2d_R}{nd_R-2d_R(d_R+1)-1-2r}\right)/
        \left(1+\frac{d_R^2}{nd_R-2d_R^2-1-2r(e-1)/e}\right)
        \right]^{2r}
      \end{split}\\
      &= \frac{(d_L-1)^r(d_R-1)^r\alpha^{-r}}
          {n^{2r}}\left(\frac{1+O(1/n)}{1+O(d_R/n)}\right)^{2r}\\
      &=\frac{(d_L-1)^r(d_R-1)^r\alpha^{-r}}
          {n^{2r}}\biggl(1+O\biggl(\frac{d_R}{n}\biggr)\biggr)^{2r}\\
      &=\frac{(d_L-1)^r(d_R-1)^r\alpha^{-r}}
          {n^{2r}}\biggl(1+O\biggl(\frac{rd_R}{n}\biggr)\biggr).
      \end{align*}
      These two inequalities prove \eqref{eq:pbound}.
      
      The number of cycles of length $2r$ in $K_{m,n}$ is 
      $[m]_r[n]_r/2r$.  Using $[n]_r=n^r(1+O(r^2/n))$, which can
      be proven by showing inductively that $[n]_r\geq
      n^r(1-r^2/2n)$,
      we calculate the expected number of such cycles:
      \begin{align}
        \E[X_r] &=  \frac{[m]_r[n]_r}{2r}
        \frac{(d_L-1)^r(d_R-1)^r\alpha^{-r}}{n^{2r}}
        \left(1 + O\left(\frac{rd_R}{n}+\frac{r^2}{nd_R}\right)
        \right)\nonumber\\
        &=\frac{(n\alpha)^rn^r(1+O(r^2/n))}{2r}
        \frac{(d_L-1)^r(d_R-1)^r\alpha^{-r}}{n^{2r}}
        \left(1 + O\left(\frac{rd_R}{n}+\frac{r^2}{nd_R}\right)
        \right)\nonumber\\
        &=
        \mu_r
        \left(1 + O\left(\frac{r(r+d_R)}{n}\right)\right).\label{eq:easym}
      \end{align}

      Let $\Cc$ be the set of $2r$-cycles in $K_{m,n}$.
      We will calculate $\var[X_r]$ using the equation
      \begin{align*}
        \E[X_r^2] = \sum_{C_1\in\Cc}\sum_{C_2\in\Cc}\P[C_1\cup C_2\subset G].
      \end{align*}
      
      We break up $\Cc\times\Cc$ to help calculate this sum.
      \begin{align*}
        \Cc_1 &= \{(C_1,C_2)\in \Cc\times\Cc:\ C_1\cap C_2=\emptyset\},\\
        \Cc_2 &= \{(C_1,C_2)\in\Cc\times\Cc:\ \text{$C_1\cap C_2\neq\emptyset$,
          but $C_1\neq C_2$}\},\\
        \Cc_3 &= \{(C_1,C_2)\in\Cc\times\Cc:\ C_1=C_2\}.
      \end{align*}
      We are considering cycles as collections of edges, so pairs
      of cycles that share vertices but not edges belong in $\Cc_1$
      rather than $\Cc_2$.
      
      For $(C_1,C_2)\in\Cc_1$, Proposition~\ref{thm:mcgbound}\ref{item:gub}
      and a calculation
      identical to the one in \eqref{eq:ubasym} show that
      \begin{align*}
        \P[C_1\cup C_2\subset G]\leq
          \frac{(d_L-1)^{2r}(d_R-1)^{2r}\alpha^{-2r}}{n^{4r}}\left(
        1+O\left(\frac{rd_R}{n}+\frac{r^2}{nd_R}\right)\right)
      \end{align*}
      Bounding $|\Cc_1|$ by $|\Cc|=([m]_r[n]_r/2r)^2$ and
      repeating the calculation in \eqref{eq:easym} gives
      \begin{align}
        \sum_{(C_1,C_2)\in\Cc_1}\P[C_1\cup C_2\subset G] \leq \mu_r^2
        \Big(1+O\Big(\frac{r(r+d_R)}{n}\Big)\Big).\label{eq:C1sum}
      \end{align}
      For a lower bound on this sum, we note that by 
      Proposition~\ref{thm:mcgbound}\ref{item:glb}, for any $2r$-cycles
      $C_1$ and $C_2$ that share no vertices,
      \begin{align*}
        \P[C_1\cup C_2\subset G]&\geq 
        \frac{d_L^{2r}(d_L-1)^{2r}d_R^{2r}(d_R-1)^{2r}}{(nd_R)^{4r}}
        \left(
        \frac{1+O(1/n)}
        {1+O(d_R/n)}
        \right)^{4r}\\
        &= \frac{(d_L-1)^{2r}(d_R-1)^{2r}\alpha^{-2r}}{n^{4r}}
        \left(1+O\left(\frac{rd_R}{n}\right)
        \right).
      \end{align*}
      Summing this over the $([n]_r[m]_r/2r)([n-r]_r[m-r]_r/2r)$ such
      pairs of $2r$-cycles provides the bound
      \begin{align*}
        \sum_{(C_1,C_2)\in\Cc_1}\P[C_1\cup C_2\subset G] &\geq        
          \frac{[n]_r[m]_r[n-r]_r[m-r]_r\alpha^{-2r}}{n^{4r}}
          \mu_r^2
        \left(1+O\left(\frac{rd_R}{n}\right)
        \right)\\
        &=
          \frac{n^rm^r(n-r)^r(m-r)^r(1+O(r^2/n))
          \alpha^{-2r}}{n^{4r}}
          \mu_r^2
        \left(1+O\left(\frac{rd_R}{n}\right)
        \right)\\
        &=
          \biggl(\left(1-\frac{r}{n}\right)\left(1-\frac{r}{\alpha n}\right)
          \biggr)^r
          \mu_r^2
          \left(1+O\left(\frac{r^2}{n}\right)\right)
        \left(1+O\left(\frac{rd_R}{n}\right)
        \right)\\
        &=
          \mu_r^2
        \left(1+O\left(\frac{r(r+d_R)}{n}\right)
        \right).
      \end{align*}
      
      The sum over $\Cc_3$ is
      \begin{align}
        \sum_{(C_1,C_2)\in\Cc_3}\P[C_1\cup C_2\subset G] = \E[X_r]=
        \mu_r\Big(1+O\Big(\frac{r(r+d_R)}{n}\Big)\Big).\label{eq:C3sum}
      \end{align}
      
      To estimate the sum over $\Cc_2$, we bound the number of
      isomorphism types of a graph $H=C_1\cup C_2$ for
      $(C_1,C_2)\in \Cc_2$.  Let $H'$ be the graph
      $(V(C_1)\cap V(C_2),E(C_1)\cap E(C_2))$.
      Say that $H'$ has $p$ components and $j$ edges.  As $H'$ is a forest,
      it has $p+j$ vertices, so $H$ has $4r-p-j$ vertices.
      We also note that $H$ has
      $4r-j$ edges.
      
      Let $C_1=a_1a_2\cdots a_{2r}$ and $C_2=b_1b_2\cdots b_{2r}$,
      with $a_1=b_1$.  Let $A_1,\ldots,A_p$ be the components of $H'$
      ordered as the appear in $C_1$.  We can encode the isomorphism
      type of  $H$ in the following four sequences:
      \begin{itemize}
        \item $s_i$ is the number of vertices in $A_i$
        \item $t_i$ is the smallest $j$ such that $a_j\in A_i$
        \item $u_i$ is the smallest $j$ such that $b_j\in A_i$
        \item
          $v_i$ specifies whether $A_i$ is oriented the same way
          in $C_1$ as in $C_2$ (if $A_i$ is a single vertex, 
          consider it to be oriented the same)
      \end{itemize}
      For example, the following diagram is the union of two cycles of
      length 8, the first colored black and the second
      colored gray.
      \begin{center}
          \begin{tikzpicture}[scale=1.8,vert/.style={circle,fill,inner sep=0,
              minimum size=0.1cm,draw},>=stealth]
            \node[vert,label=248:{$b_1=a_1$}] (a1) at (248:1) {};
            \node[vert,label=10:{$a_2$}] (a2) at (203:1) {};
            \node[vert,label=158:{$b_2=a_3$}] (a3) at (158:1) {};
            \node[vert,label=113:{$b_3=a_4$}] (a4) at (113:1) {};
            \node[vert,label=68:{$a_5$}] (a5) at (68:1) {};
            \node[vert,label=23:{$b_8=a_6$}] (a6) at (23:1) {};
            \node[vert,label=-22:{$b_7=a_7$}] (a7) at (-22:1) {};
            \node[vert,label=270:{$b_6=a_8$}] (a8) at (-67:1) {};
            \node[vert,label=45:{$b_4$}] (b4) at (45:1.7) {};
            \node[vert,label=-40:{$b_5$}] (b5) at (-40:2) {};
            \begin{scope}[thick]
              \draw[->] (a1) to (a2);
              \draw[->] (a2) to (a3);
              \draw[->] (a3) to (a4);
              \draw[->] (a4) to (a5);
              \draw[->] (a5) to (a6);
              \draw[->] (a6) to (a7);
              \draw[->] (a7) to (a8);
              \draw[->] (a8) to (a1);
            \end{scope}
            \begin{scope}[thick,gray]
              \draw[->] (a1) to [out=200,in=200] (a3);
              \draw[->] (a3) to [out=90,in=180] (a4);
              \draw[->] (a4) to [out=45,in=135] (b4);
              \draw[->] (b4) to [out=350,in=20] (b5);
              \draw[->] (b5) to (a8);
              \draw[->] (a8) to [out=0,in=270] (a7);
              \draw[->] (a7) to [out=45,in=315] (a6);
              \draw[->] (a6) to [out=210,in=60] (a1);
            \end{scope}
          \end{tikzpicture}
        \end{center}
      The intersection graph $H'$ has three components, 
      $A_1=a_1=b_1$, $A_2=a_3a_4=b_2b_3$, and $A_3=a_6a_7a_8=b_6b_7b_8$.  The
      four sequences for this graph are
      \begin{align*}
        \mathbf{s}:&\ 1, 2, 3\\
        \mathbf{t}:&\ 1, 3, 6\\
        \mathbf{u}:&\ 1, 2, 6\\
        \mathbf{v}:&\ \text{yes, yes, no}
      \end{align*}
      To illustrate that the isomorphism class of $H$ is encoded
      in these sequences, We will demonstrate how to recover it in
      this example.
      Start by drawing a cycle and labeling its vertices $a_1,\ldots,
      a_8$.  From $\mathbf{s}$ and $\mathbf{t}$, we can deduce that
      $A_1=a_1$, $A_2=a_3a_4$, and $A_3=a_6a_7a_8$.  From
      $\mathbf{u}$ and $\mathbf{v}$, we deduce that $b_1=a_1$,
      $b_2=a_3$, $b_3=a_4$, $b_6=a_8$, $b_7=a_7$, and $b_8=a_6$.
      Since $b_4$ and $b_5$ are unaccounted for, we conclude that
      they are not contained in $a_1\cdots a_8$.  Once we add edges
      to connect $b_1$ to $b_2$, $b_2$ to $b_3$, and so on, 
      we have recreated $H$ up to
      isomorphism.
        
      Now, we consider the number of possible isomorphism classes of 
      some $H=C_1\cup C_2$.
      The sequence $\mathbf{s}$ is a composition of $p+j$ into
      exactly $p$ parts, so there are $\binom{p+j-1}{p-1}$
      possibilities.  We know that $t_1=1$, and we know that
      $t_2,\ldots,t_p$ are ordered and are distinct, so
      there are at most $\binom{2r-1}{p-1}$ choices for $\mathbf{t}$.
      We know that $u_1=1$ and that $u_2,\ldots,u_p$ are distinct but not
      necessarily in any order, so there are at most
      $\binom{2r-1}{p-1}(p-1)!$ choices for $\mathbf{u}$.  For
      $\mathbf{v}$, there are $2^{p-1}$ choices.  For a fixed choice of
      of $p$ and $j$, the number of possible isomorphism classes of
      $H$ is hence bounded by
      \begin{align*}
        \binom{p+j-1}{p-1}\binom{2r-1}{p-1}^2(p-1)!2^{p-1}.
      \end{align*}
      By replacing $p+j-1$ and $2r-1$ by $2r$, we bound this quantity
      by
      \begin{align*}
        \binom{2r}{p-1}^3(p-1)!2^{p-1}\leq \frac{(16r^3)^{p-1}}{(p-1)!^2}.
      \end{align*}
      
      Suppose an isomorphism type has $a$ vertices from the left vertex class
      and $b$ from the right vertex class (with $a+b=4r-p-j$).
      Then this isomorphism type can be realized in at most
      $[n]_a[m]_b+[n]_b[m]_a\leq 2\alpha^{4r-p-j} n^{4r-p-j}$ ways.
      By Proposition~\ref{thm:mcgbound}\ref{item:gub}, 
      the probability of any one of these
      realizations being a subgraph of $G$ is bounded by
      \begin{align*}
        \frac{d_L^{(4r-j)/2}(d_L-1)^{(4r-j)/2}
        d_R^{(4r-j)/2}(d_R-1)^{(4r-j)/2}}{[nd_R-4d_R^2-1]_{4r-j}}=
        \frac{(d_L-1)^{(4r-j)/2}(d_R-1)^{(4r-j)/2}}{\alpha^{(4r-j)/2}n^{4r-j}}O(1).
      \end{align*}
      All together, we have the bound
      \begin{align}
        \sum_{(C_1,C_2)\in\Cc_3}\P[C_1\cup C_2\subset G]
        &\leq \sum_{p=1}^{2r}\sum_{j=1}^{2r}\frac{(16r^3)^{p-1}}{(p-1)!^2}
        \alpha^{4r-p-j} n^{4r-p-j}\frac{(d_L-1)^{(4r-j)/2}(d_R-1)^{(4r-j)/2}}
        {\alpha^{(4r-j)/2}n^{4r-j}}O(1)\nonumber\\
        &= \sum_{p=1}^{2r}\sum_{j=1}^{2r}\frac{(16r^3)^{p-1}}{(p-1)!^2}
        \frac{(d_L-1)^{(4r-j)/2}(d_R-1)^{(4r-j)/2}}
        {\alpha^{p-2r-j/2}n^{p}}O(1)\nonumber\\
        &=O\left(\frac{\alpha^{(4r-1)/2}
        (d_L-1)^{(4r-1)/2}(d_R-1)^{(4r-1)/2}}{n}\right).\label{eq:sharededge}
      \end{align}
      
      Combining this with \eqref{eq:C1sum} and \eqref{eq:C3sum} shows
      \begin{align*}
        \begin{split}
        \var[X_r]&= \mu_r^2
        \Big(1+O\Big(\frac{r(r+d_R)}{n}\Big)\Big)+\mu_r
        \Big(1+O\Big(\frac{r(r+d_R)}{n}\Big)\Big)\\
        &\phantom{=}\ -\E[X_r]^2 +
        O\left(\frac{\alpha^{(4r-1)/2}
        (d_L-1)^{(4r-1)/2}(d_R-1)^{(4r-1)/2}}{n}\right)
        \end{split}\\
        &=\mu_r+\mu_r^2\left(
        O\Big(\frac{r(r+d_R)}{n}\Big)+
        O\Big(\frac{r(r+d_R)}{\mu_rn}\Big)+
        O\left(\frac{r^2\alpha^{(4r-1)/2}}{n(d_L-1)^{1/2}(d_R-1)^{1/2}}
        \right)\right)\\
        &=\mu_r+\mu_r^2O\left(\frac{r\bigl(\alpha^{(4r-1)/2}r
          +d_R\bigr)}{n}\right).
      \end{align*}
      Hence
      \begin{align*}
        \var[X_r]
          &=\mu_r\left(1+\mu_rO\left(\frac{r\bigl(\alpha^{(4r-1)/2}r
          +d_R\bigr)}{n}\right)\right)\\
          &=\mu_r\left(1+O\bigg(\frac{d_R^{2r}(r\alpha^{2r-1}+\alpha^{-r}d_R)}
          {n}\bigg)\right).
          \tag*{\qedhere}
      \end{align*}
  \end{proof}

  \bibliographystyle{alpha}
  \bibliography{paper-bip}

\end{document}